\providecommand{\U}[1]{\protect\rule{.1in}{.1in}}
\newcommand{\Finsler}{\parallel \cdot \parallel^n}
\newcommand{\rr}{\mathbb{R}}
\newcommand{\ff}{\mathbb{F}}
\newcommand{\nn}{\mathbb{N}}
\newcommand{\vf}{\mathcal{F}}
\newcommand{\s}{\bm{S}}
\newcommand{\mc}{MC^k}
\newcommand{\sn}{\parallel}
\DeclareMathOperator{\dd}{d}
\DeclareMathOperator{\Img}{Img}
\newcommand{\mycmd}[1]{\mbox{\sansmath$\mathsf{#1}$}}
\DeclareMathAlphabet{\mathpzc}{OT1}{pzc}{m}{it}
\newtheorem{definition}{Definition}
\newtheorem{theorem}{Theorem}
\newtheorem{lemma}{Lemma}
\newtheorem{remark}{Remark}
\newtheorem{proposition}{Proposition}
\begin{document}

\title{Finslerian geodesics on Fr\'{e}chet manifolds}
\author{Kaveh Eftekharinasab}
\address[Kaveh Eftekharinasab]{Topology lab.  Institute of Mathematics of National Academy of Sciences of Ukraine, Kyiv, Ukraine}
\email{kaveh@imath.kiev.ua}

\author{Valentyna Petrusenko}
\address[Valentyna Petrusenko]{The Higher Mathematics Department, National Aviation University, Kyiv,  Ukraine}
\email{petrusenko76@ukr.net}

\subjclass[2000]{58E10, 58B20, 53C44}
\keywords{Fr\'{e}chet nuclear manifold, Finsler structure, Geodesic }
\begin{abstract}

We establish a framework, namely, nuclear bounded Fr\'{e}chet manifolds endowed with Riemann-Finsler structures to study geodesic curves  on certain infinite dimensional  manifolds such as the manifold of Riemannian metrics on a closed manifold. We prove on these manifolds   geodesics exist locally and they are  length minimizing in a sense. Moreover, we show that a curve on these manifolds is geodesic if and only if it satisfies a collection of Euler-Lagrange equations.
As an application, without much difficulty, we prove that the solution to the Ricci flow on an Einstein manifold is not geodesic. 

\end{abstract}
\maketitle
\section{Introduction}

The Riemannian geometry, including geodesics, of the manifold of all Riemannian metrics on a closed manifold which is a Fr\'{e}chet manifold was studied in~\cite{g,mi}.
In these papers the geodesic equation is described explicitly, however, in practice it would be difficult to check if a curve is geodesic by the obtained formulas.
On the other hand, geodesics of other spaces such as groups of diffeomorphisms that have the structure of  Fr\'{e}chet manifolds  were investigated
by viewing Fr\'{e}chet manifolds as inverse limits of Hilbert (ILH) manifolds, cf.~\cite{ck,jl,e}. Another recent approach to study geodesics 
on Fr\'{e}chet manifolds is by considering these manifolds as projective limits of Banach manifolds, cf.~\cite{rz1,rz2}.

The reasons for these difficulties and indirect approaches are because Fr\'{e}chet analysis and geometry are rather restrictive. As for Fr\'{e}chet spaces, there is no 
general solvability theory of differential equations and the inverse
mapping theorem does not hold in general.  Hence,
for a Riemannian Fr\'{e}chet manifold the exponential map may not exist, and even if it exists it is not necessarily a local diffeomorphism at the identity.  Another concern is that there exist only weak Riemannian metrics on these manifolds and as shown in~\cite{mf,mf1}  a curve connecting two distinct points may have the zero length. Also, a torsion-free covariant derivative
compatible with a weak Riemannian metric does not exist in general. These deficiencies inhibit the study of geodesics on these manifolds.

The purpose of this paper is to develop a new natural systematic way to study geodesics on certain Fr\'{e}chet  (bounded or $ MC^k $) manifolds including  the space of smooth sections of a fiber bundle on a closed manifold. Our approach is based on a strengthened notion of differentiability (bounded or $ MC^k $-differentiability) introduced in~\cite{ml}. The  basics of Fr\'{e}chet geometry is redeveloped  under the assumption  that transition functions between the coordinate charts possess this type of differentiability in~\cite{k1}. Such generalized manifolds seem to extend the geometry of Fr\'{e}chet manifolds: for example, 
an  inverse function theorem is obtained  for this class of differentiability \cite[Theorem 4.10]{ml}.
Also, an $ MC^k $-vector field on an $ MC^k $-Fr\'{e}chet manifold $M$ has a unique $ MC^k $-integral curve (\cite[Theorem 5.1]{k1}) and in this paper we prove that it has a  local flow too, see Theorem~\ref{t3}. Also, we prove that  this flow is $\mc$-differentiable and its domain is open in $M \times \rr$ (Lemma~\ref{3}). This result is  crucial for studying geodesics on manifolds.

To define geodesics we will apply the notion of spray as in the book of Lang~\cite{l} (cf.~\cite{om1, klin} for other approaches to geodesics on infinite dimensional manifolds). A reason for this approach is that once we have the existence of
integral curves, we can carry over important results such as the existence of exponential maps
and parallel translation from the Banach case without much difficulty, indeed we shall face many similarities with the results in Banach geometry.
We also prove that, for these generalized manifolds, exponential maps are local diffeomorphisms at the identity (Proposition~\ref{expo}).

As mentioned, since Fr\'{e}chet manifolds are weakly Riemannian, the length of a curve with distinct endpoints can be zero. On an abstract infinite dimension Fr\'{e}chet manifold $M$ there are two ways to deal with this problem: use a graded weak Riemannian structure or a Finsler structure, see~\cite{sn1}. We use a collection of weak Riemannian metrics (for a graded weak Riemannian structure) and a collection of continuous functions on  the tangent bundle $TM$ (for a Finsler structure) so that together they are strong enough to induce a
topology on the tangent spaces equivalent to the one induced from the manifold
topology. Consequently, in both cases, a curve possesses a sequence of geodesic lengths. 

Herein we will use  a Finsler structure (in the sense of Palais~\cite{pl} which is a Finsler structure in the sense of Upmeier-Neeb~\cite{neeb}) as it is slightly less technical than a graded weak Riemannian structure.
 Roughly speaking a Finsler structure on an infinite dimensional Fr\'{e}chet manifold $M$ is a collection of continuous  functions on the tangent bundle $TM$ such that their restrictions
to every tangent space is a collection of seminorms that generates the same topology as the  Fr\'{e}chet model space. In addition, they satisfy a certain local compatibility condition. We should mention that our definition of a Finsler structure differs and it is far more general than the one in the finite dimensional theory.  
As pointed out by Neeb~\cite{neeb} for infinite dimensional manifolds some crucial Finsler geometric results (such as the Gauss's lemma) are not available in general and we cannot expect to have the usual machinery of
Finsler geometry. However, in the case of nuclear bounded Fr\'{e}chet  manifolds since the topology of a model space is generated by a fundamental system of $MC^{\infty}$-Hilbertian seminorms $ \sn \cdot \sn^n =  \sqrt {\langle \cdot,\cdot \rangle _{n}}  $, in fact they give rise to a Riemann-Finsler structure, we can define appropriately the concept of orthogonality. Moreover, another crucial advantage of nuclear Fr\'{e}chet manifolds (even over Banach manifolds) is that for these 
manifolds smooth vector fields can be identified with continuous derivations in the space of smooth real-valued functions on manifolds. Using these properties for an $MC^{\infty}$- nuclear Fr\'{e}chet manifold equipped with a Riemann-Finsler structure we 
prove the existence of covariant derivatives compatible with the Riemann-Finsler structure (Proposition~\ref{key}) and the Gauss Lemma (Theorem~\ref{gl}). 

In view of the arguments above we believe that the category of $MC^{\infty}$-nuclear Fr\'{e}chet manifolds 
provide a suitable setting for studying geodesics. On these manifolds, we prove that  geodesics exist locally (Theorem~\ref{23}) and they are  length minimizing in a sense (Theorem~\ref{lenm}). Also, we prove that a curve is geodesic
if and only if it satisfies a collection of Euler-Lagrange equations (Theorem~\ref{elq}). Finally, we show easily that the solution of the Ricci flow equation on an Einstein manifold is not geodesic.

It is worth noting that this category of infinite dimensional manifolds would provide an appropriate framework for studying  configuration spaces of physical field theories. As pointed out in~\cite{jl}, these spaces lead to Fr\'{e}chet manifolds
and to discuss motions  we need paths of minimal lengths. 
\section{Bounded Fr\'{e}chet manifolds}
In this section,  we shall briefly recall the basics of bounded Fr\'{e}chet manifolds but in a self-contained way for the convenience of readers, which also allows us to  establish our notations for the rest of the paper. For more studies, we refer to~\cite{k,k1,k3,ml}.

As mentioned, we use the notion of bounded or $MC^k$-differentiability. It is based on Keller's differentiability but much stronger.
Originally, in~\cite{ml} it is called bounded differentiability  but later on the term $MC^k$-differentiability has been used equivalently. 

Let $E, F$ be Fr\'{e}chet spaces, $ U $  an open subset of $ E $ and $ \varphi:U \rightarrow F $
a continuous map. Let $CL(E,F)$ be the space of all continuous linear maps from $E$ to $F$ topologized by the compact-open topology. If the directional (G\^{a}teaux) derivatives
$$\operatorname{d}\varphi(x)h = \lim_{ t \to 0} \dfrac{\phi(x+th)-\phi(x)}{t}$$
exist for all $x \in U$ and all $ h \in E $, and  the induced map  $\operatorname{d}\varphi(x) : U \rightarrow CL(E,F)$ is continuous for all
$x \in U$, then  we say that $ \varphi $ is a Keller's differentiable map of class  $C^1$. 
The higher directional derivatives and $C^k$-maps, $k\geq2$, are defined in the obvious inductive fashion.

To define  bounded differentiability, we endow a Fr\'{e}chet space $ F$ with a translation
invariant metric $\varrho$ defining its topology, and then introduce the metric concepts which strongly depend on
the choice of $\varrho$.  We consider only metrics of the following form
$$
\varrho (x,y) = \sup_{n \in \nn} \dfrac{1}{2^n} \dfrac{\sn x-y \sn^n_F}{1+\sn x-y \sn^n_F},
$$
where $  \{ \sn \cdot \sn^n _F\}_{n \in \nn}$ is a collection of seminorms generating the  topology of $F$.

Let $(E, \sigma)$ be another Fr\'{e}chet space and let $\mathbb{L}_{\sigma,\varrho}(E,F)$ be the set of all 
linear maps $ L: E \rightarrow F $ which are (globally) Lipschitz continuous as mappings between metric spaces $E$ and $F$, that is 
\begin{equation*}
\mathpzc{Lip} (L )_{\sigma,\varrho}\, \coloneq \displaystyle \sup_{x \in E\setminus\{0\}} \dfrac{\varrho (L(x),0)}{\sigma( x,0)} < \infty,
\end{equation*}
where $\mathpzc{Lip}(L)$ is the (minimal) Lipschitz constant of $L$.

The translation invariant metric 
\begin{equation} \label{metric}  
\mathbbm{d}_{\sigma,\varrho}: \mathbb{L}_{\sigma,\varrho}(E,F) \times \mathbb{L}_{\sigma,\varrho}(E,F) \longrightarrow [0,\infty) , \,\,
(L,H) \mapsto \mathpzc{Lip}(L-H)_{\sigma,\varrho} \,,
\end{equation}
on $\mathbb{L}_{\sigma,\varrho}(E,F)$ turns it  into an Abelian topological group. We always topologize the space $\mathbb{L}_{\sigma,\varrho}(E,F)$ by the metric~\eqref{metric}.

Let $ U $ be an open subset of $ E $ and let $ \varphi:U \rightarrow F $ be
a continuous map. If $\varphi$ is Keller's differentiable, $ \operatorname{d}\varphi(x) \in \mathbb{L}_{\sigma,\varrho}(E,F) $ for all $ x \in U $ and the induced map 
$ \dd \varphi(x) : U \rightarrow \mathbb{L}_{\sigma,\varrho}(E,F)$ is continuous, then $ \varphi $ is called bounded differentiable 
or $ MC^{1} $ and we write  $\varphi^{(1)} = \varphi' $. We define  for $(k>1) $  maps of class $ MC^k$, recursively. If $ \lambda (t) $ is a  curve in a Fr\'{e}chet space,
we denote its derivative by $\lambda'$ or $ \dd \lambda(t)/ \dd t $. For product spaces, we denote by  $ \dd_i $ (in the case of curves by $\partial_i$ ) the partial derivative with respect to the $i$-th variable.

An $MC^k$-Fr\'{e}chet manifold is a Hausdorff second countable topological space modeled on a Fr\'{e}chet space with an atlas of coordinate 
charts  such that the coordinate transition functions are all
$ MC^{k} $-maps. We define $MC^k$-maps between Fr\'{e}chet manifolds as usual.

We recall the definition of nuclear manifolds as we mainly work with these manifolds.
Let $(B_1, \mid \cdot \mid_1)$ and $(B_2, \mid \cdot \mid_2)$ be Banach spaces. A linear operator $T: B_1 \to B_2$
is called nuclear or trace class if it can be written in the form
$$
T(x) = \sum_{j=1}^{\infty} \lambda_j \langle x,x_j \rangle y_j,
$$
where $\langle \cdot, \cdot \rangle$ is the duality pairing between $B_1$ and its dual $(B_1', \mid \cdot \mid'_1)$, $x_j \in B_1'$ with
$\mid x_j \mid_1' \leq 1$, $y_j \in B_2$ with $\mid y_1 \mid_2 \leq 1$, and $\lambda_j$ are complex numbers such that $\sum_j \mid \lambda_j \mid < \infty$.

If $\sn \cdot \sn_F^i$ is a seminorm on a Fr\'{e}chet space $F$, we denote by $F_i$ the Banach space given by completing $F$ using the seminorm
$\sn \cdot \sn_F^i$, there is a natural map from $F$ to $F_i$ whose kernel is $\ker \sn \cdot \sn_F^i$. A Fr\'{e}chet space
$F$ is called nuclear if for any seminorm $\sn \cdot \sn_F^i$ we can find a larger seminorm  $\sn \cdot \sn_F^j$
so that the natural induced map from $F_j$ to $F_i$ is nuclear. A nuclear Fr\'{e}chet manifold is a manifold modeled on a nuclear Fr\'{e}chet space. Each nuclear Fr\'{e}chet space admits a fundamental system of Hilbertian seminorms, see~\cite{michor}. There are no infinite dimensional Banach spaces that are nuclear. A simple example of Fr\'{e}chet nuclear space
is the space of smooth functions $C^{\infty}(U,\rr)$, $U \subset \rr^n$ is open, with the fundamental system of seminorms
$$
\sn f \sn^i = \sup_{\substack {x \in S_i, \\ \mid \alpha \mid \leq i}} \mid f^{(\alpha)}(x) \mid,
$$
where $ S_1 \subset S_2 \subset S_2 \cdots$ is  an exhaustion by open sets. 

A very important example of a Fr\'{e}chet nuclear (bounded) manifold
is  the manifold of all smooth sections of a fiber bundle (such as the manifold of Riemannian metrics) on a closed manifold. For more details on nuclear spaces we refer to~\cite{michor}.       

Let $M$ be an $\mc$-Fr\'{e}chet manifold modeled on a Fr\'{e}chet space $F$. Let
$p \in M$, tangent vectors $v \in T_pM$ are defined as equivalence classes of smooth curves passing through $p$, where the equivalency means
that curves have the same derivative at $p$. We write $T M  \coloneq   \bigcup _{p \in M}T_p M$ for the
tangent bundle of $M$. The bundle projection $ \pi : T M  \to  M$ maps elements of $T_p M$  to $p$, the tangent bundle $TM$ carries a natural vector bundle structure, see~\cite[Thorem 3.1]{k1}. 

 An important feature of an $MC^k$-Fr\'{e}chet manifold $M$ (which is not true for Fr\'{e}chet manifolds in general)
is that an $ \mc $-vector field $X: M \to TM $ has a unique integral curve. More precisely,
\begin{theorem}\cite[Theorem 5.1]{k1}\label{2}
	Let $X :M \rightarrow TM$ be a vector field of class  $ \mc $, $ k \geq 1 $. Then there exits an integral curve for $X$ at $x \in M$. Furthermore, any two such curves are equal on the intersection of their domains.
\end{theorem}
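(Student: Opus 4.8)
The plan is to reduce the statement to a fixed-point problem in a single chart and then to exploit the characteristic feature of $\mc$-differentiability, namely that derivatives are Lipschitz, so as to run a Picard-type iteration that is unavailable for general Fr\'echet spaces. First I would localize: choosing an $\mc$-chart $(U,u)$ around $x$ with $u(x)=0$, the vector field $X$ is represented by its principal part, an $\mc$-map $f\colon u(U)\subseteq F\to F$, and an integral curve $\alpha$ with $\alpha(0)=x$ corresponds, after composition with $u$, to a curve $c\colon J\to F$ on an interval $J\ni 0$ solving $c'(t)=f(c(t))$, $c(0)=0$. Since $F$ is complete and continuous curves in Fr\'echet spaces are Riemann integrable, this is equivalent to the integral equation $c(t)=\int_0^t f(c(s))\,\dd s$, so the task becomes producing a fixed point of the operator $(\Phi c)(t)=\int_0^t f(c(s))\,\dd s$ on a suitable complete space of curves.

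The key step, and the one that genuinely uses the $\mc$-hypothesis, is a local Lipschitz estimate for $f$. By the fundamental theorem of calculus one writes $f(b)-f(a)=\int_0^1 \dd f\big(a+s(b-a)\big)(b-a)\,\dd s$; because $f$ is $MC^1$, each $\dd f(y)$ lies in $\mathbb{L}_{\sigma,\varrho}(F,F)$ and $y\mapsto \dd f(y)$ is continuous into this metrized space, so on a small closed ball about $0$ the Lipschitz constants $\mathpzc{Lip}(\dd f(y))_{\sigma,\varrho}$ are uniformly bounded, say by $C$. This yields a bound of the form $\varrho(f(b),f(a))\le C\,\varrho(b,a)$ on the ball, which is exactly the ingredient absent in the unrestricted Fr\'echet category.

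Feeding this into $\Phi$, I would show that on the space $\mathcal{C}$ of continuous curves $J\to \bar B$, with $J$ a short interval and $\bar B$ the small closed ball above, equipped with the uniform metric $d_\infty(c_1,c_2)=\sup_{t\in J}\varrho(c_1(t),c_2(t))$, the operator $\Phi$ maps $\mathcal{C}$ into itself and is a contraction once $J$ is short enough; completeness of $\mathcal{C}$, inherited from completeness of $F$, together with the Banach fixed-point theorem then delivers a unique fixed point, hence a local integral curve through $x$. The resulting $c$ is a priori only continuous, but bootstrapping through $c'=f\circ c$ upgrades it to an $\mc$-curve, since $f$ is $\mc$. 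For the global uniqueness clause I would argue by a standard open--closed continuation: given two integral curves $\alpha,\beta$ on a common interval with $\alpha(t_0)=\beta(t_0)$, the set $\{t:\alpha(t)=\beta(t)\}$ is closed by continuity and open by the local uniqueness just obtained, applying the contraction argument in a chart about any agreement point, hence equals the whole connected intersection of domains.

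The main obstacle I anticipate lies in the Lipschitz and contraction estimates of the middle paragraphs: because $\varrho$ is a bounded, non-homogeneous metric, the familiar normwise inequalities such as $\varrho\big(\int g,0\big)\le\int \varrho(g,0)\,\dd s$ cannot be taken for granted but must be justified through the concavity and subadditivity of $t\mapsto t/(1+t)$ applied to the generating seminorms $\sn\cdot\sn^n_F$, and the contraction constant must be controlled in this metric rather than by the naive scaling one would use with a norm. Once these metric subtleties are handled, every remaining step parallels the Banach case.
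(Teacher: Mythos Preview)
The present paper does not contain a proof of this theorem: it is quoted verbatim from \cite[Theorem~5.1]{k1} and used as a black box throughout (e.g.\ in Theorem~\ref{t3} and Lemma~\ref{3}). There is therefore nothing in this paper to compare your argument against.

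That said, your outline is the expected one and almost certainly coincides with the argument in \cite{k1}: localize to a chart, convert the ODE into the integral equation $c(t)=\int_0^t f(c(s))\,\dd s$, extract from the $MC^1$-hypothesis a uniform bound on $\mathpzc{Lip}(\dd f(y))_{\sigma,\varrho}$ near $0$, run a Banach fixed-point argument on a complete space of curves, bootstrap regularity via $c'=f\circ c$, and globalize uniqueness by the open--closed continuation. Each of these steps is standard once the Lipschitz input is available, and the $MC^k$-framework exists precisely to supply that input.

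The one place where your sketch is not yet a proof is exactly the point you flag: the contraction estimate for $\Phi$ in the metric $\varrho$. Because $\varrho$ is built from $t\mapsto t/(1+t)$ applied to the seminorms, it is bounded and fails to scale, so the usual inequality $\varrho\big(\int_0^t h,0\big)\le |t|\sup_s\varrho(h(s),0)$ is simply false (take $h$ constant with large seminorms and $t$ small). Concavity of $t/(1+t)$ gives Jensen in the wrong direction, and subadditivity alone does not recover the factor $|t|$ needed for a contraction. The way out, which you should make explicit, is to carry the estimates seminorm by seminorm: one has $\sn \Phi c_1(t)-\Phi c_2(t)\sn^n\le |t|\sup_s\sn f(c_1(s))-f(c_2(s))\sn^n$, and the $MC^1$-bound together with the mean-value integral controls the right-hand side in terms of $\sup_s\varrho(c_1(s),c_2(s))$ after unpacking what $\mathpzc{Lip}(\dd f(y))_{\sigma,\varrho}\le C$ says about individual seminorms on a ball where all $\sn\cdot\sn^n$ are small. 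Only after this do you reassemble $\varrho$ and obtain a genuine contraction for $|t|$ small. Without this refinement the fixed-point step is a gap, not merely a subtlety.
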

Another important feature of $MC^k$-differentiability (which is not true for Keller's differentiability) is that an $ \mc $-vector field on a Fr\'{e}chet space has an $\mc$-local flow.
\begin{theorem}\cite[Theorem 2.2]{k}\label{flow}
	Let $X$ be an $ \mc $-vector field on $ U \subset F $, $ k\geq 1 $. There exists  a real number $ a > 0 $
	such that for each $ x \in U $ there exists a unique integral curve $\ell_x(t)$ satisfying $ \ell_x(0)=x $  for all $ t \in I_a = (-a, a) $.
	Furthermore, the mapping
	$\ff : I_a \times U \rightarrow F$ given by $\ff_t(x)= \ff(t,x)= \ell_x(t) $ is of class $ \mc $.
\end{theorem}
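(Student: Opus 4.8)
The plan is to assemble the flow out of the integral curves already provided by Theorem~\ref{2}, extracting in turn a uniform time of existence, joint continuity, and finally $\mc$-regularity. First I would fix $x_0 \in U$ and, using the $MC^1$-continuity of $X$, pass to a neighborhood $V$ of $x_0$ on which the Lipschitz constants $\mathpzc{Lip}(\dd X(x))_{\sigma,\varrho}$ are uniformly bounded by some $C$ and on which $\varrho(X(x),0)$ is bounded by some $B$. By Theorem~\ref{2} each $x \in V$ admits a unique integral curve $\ell_x$ with $\ell_x(0)=x$; the bound on $X$ together with translation invariance of $\varrho$ keeps $\ell_x(t)$ inside $V$ for $|t| < a$ once $a>0$ is small enough, and since $a$ depends only on the uniform constants it serves every starting point, giving the interval $I_a$ of the statement. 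Setting $\ff(t,x)=\ell_x(t)$ then defines the candidate flow, and the identity $\ff_{s+t}=\ff_s\circ\ff_t$ (wherever defined) follows at once from the uniqueness clause of Theorem~\ref{2}.

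Next I would establish joint continuity of $\ff$. Writing two integral curves in integrated form and subtracting, the uniform Lipschitz bound yields a Gronwall-type control of $\varrho(\ff_t(x),\ff_t(y))$ by $\varrho(x,y)$, uniformly for $t \in I_a$, while $\varrho(\ff_t(x),\ff_s(x))$ is controlled by $|t-s|$; together these give continuity of $\ff$ on $I_a \times V$. It is precisely the Lipschitz--metric structure of $\mathbb{L}_{\sigma,\varrho}(F,F)$ --- in which $\dd X(x)$ lives by hypothesis --- that legitimizes these Banach-style estimates in the Fr\'{e}chet setting.

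The main obstacle is to promote continuity to $\mc$-differentiability. Here I would introduce the variational equation: the spatial derivative $\dd_2\ff_t(x)h$ should solve the linear initial value problem
$$
\partial_t\,\dd_2\ff_t(x)h \;=\; \dd X\bigl(\ff_t(x)\bigr)\bigl(\dd_2\ff_t(x)h\bigr), \qquad \dd_2\ff_0(x)h = h,
$$
whose coefficient $\dd X(\ff_t(x))$ is of class $MC^{k-1}$ because $X$ is $\mc$. I would solve this linear problem by the same existence machinery and then verify that its solution genuinely realizes the G\^{a}teaux derivative of $\ff$, i.e.\ control the difference quotients $t^{-1}\bigl(\ff(\cdot,x+th)-\ff(\cdot,x)\bigr)$ in $\varrho$ and check that the induced map $x\mapsto\dd_2\ff(t,x)$ is continuous into $\mathbb{L}_{\sigma,\varrho}(F,F)$. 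This verification is the delicate step, and it is exactly here that bounded differentiability is indispensable --- Keller differentiability being too weak to furnish the requisite uniform estimates --- whereas the time derivative is harmless since $\partial_t\ff(t,x)=X(\ff_t(x))$ by the integral-curve property. With $\ff$ thus shown to be $MC^1$ and its spatial derivative expressed through $\dd X$, an induction on $k$ closes the argument: at each stage the regularity already obtained for $\ff$ is fed back into the variational equation, whose coefficient then has one higher order of regularity, raising the class of $\ff$ by one until the class $\mc$ of $X$ is reached. Since $x_0$ was arbitrary and integral curves are unique, these local flows glue to the asserted $\mc$-map $\ff:I_a\times U\to F$.
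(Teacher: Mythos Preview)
The paper does not contain a proof of this theorem: it is quoted verbatim as \cite[Theorem~2.2]{k} and used as an imported black box (most notably inside the proof of Theorem~\ref{t3}). So there is no ``paper's own proof'' to compare your proposal against.

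As for the proposal itself, the outline is the standard one --- uniform local bounds, Gronwall for continuous dependence, the variational equation for differentiability in the initial data, induction on $k$ --- and is broadly sound in the bounded-Fr\'{e}chet setting precisely because the metric $\mathbbm{d}_{\sigma,\varrho}$ on $\mathbb{L}_{\sigma,\varrho}(F,F)$ makes the usual Banach-style estimates available. Two points deserve care. First, invoking Theorem~\ref{2} for the bare existence of integral curves is logically delicate: that theorem is stated for manifolds and is itself imported from \cite{k1}, so you should check that its proof there does not already rely on the present statement; the cleaner route is to run a Picard iteration directly in $(F,\varrho)$, which the Lipschitz hypothesis on $\dd X$ supports. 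Second, your final gluing sentence does not actually deliver a single $a>0$ valid for every $x\in U$ as the statement asserts --- different base points $x_0$ may produce different $a$'s, and uniqueness of integral curves only patches the maps, not the time intervals. Either read the statement locally (which is how the paper uses it in Theorem~\ref{t3}) or add a hypothesis ensuring uniform bounds on $X$ and $\dd X$ over $U$.
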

In this paper, we define the local flow  of an $MC^k$-vector field $X : M \to TM$  and prove that it has the unique $MC^k$-flow and its domain is open in $M \times \rr$. This is indeed a critical result that allows defining exponential maps. 

A motivation for defining this class of differentiability was to obtain the following inverse function theorem:
\begin{theorem}\label{inv}\cite[Theorem 4.10]{ml}
	Let $ x_0 \in U \subset M$ be open and $\varphi: U \to N$ a $ \mc $-map, $ k\geq 2 $. If $ \varphi' (x_0) $ is an isomorphism. Then there exists $r>0$ such that $	V= \varphi( B(x_0,r)) $ is open in $ N $ and $ \varphi : B(x_0,r) \to V  $  is a diffeomorphism.
\end{theorem}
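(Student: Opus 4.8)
The plan is to prove this as an inverse function theorem by transposing the classical Banach argument into the $\mc$-setting, with the operator norm everywhere replaced by the Lipschitz metric $\mathbbm{d}_{\sigma,\varrho}$ on $\mathbb{L}_{\sigma,\varrho}$; the hypotheses $k\geq 2$ and $MC^k$-differentiability are exactly what make the Neumann-series and contraction estimates converge at the level of this metric. First I would reduce to a normalized local model. Passing to charts around $x_0$ and $\varphi(x_0)$, I may assume that $M$ and $N$ are open subsets of Fr\'echet spaces $E$ and $F$, that $x_0=0$ and $\varphi(0)=0$. Since $\varphi'(0)\in\mathbb{L}_{\sigma,\varrho}(E,F)$ is a toplinear isomorphism, its inverse is again globally Lipschitz, so replacing $\varphi$ by $\varphi'(0)^{-1}\circ\varphi$ I may further arrange that the derivative at $0$ is the identity $\ii$ of $E$. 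The task becomes: for each $y$ near $0$, solve $\varphi(x)=y$ uniquely for $x$ near $0$ and show the resulting assignment is $\mc$.

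Next comes the contraction step. Set $\psi(x)\coloneq x-\varphi(x)$, so that $\psi'(x)=\ii-\varphi'(x)$ and $\psi'(0)=0$. Because $\varphi$ is $MC^1$, the induced map $x\mapsto\varphi'(x)$ is continuous into $\mathbb{L}_{\sigma,\varrho}(E,E)$, so I can choose $r>0$ with $\mathpzc{Lip}(\psi'(x))_{\sigma,\varrho}\leq\tfrac12$ for all $x\in B(0,r)$. A mean-value inequality adapted to the translation-invariant metric $\varrho$ then upgrades this derivative bound to the genuine contraction estimate $\varrho(\psi(x_1),\psi(x_2))\leq\tfrac12\,\varrho(x_1,x_2)$ on $B(0,r)$. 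For $y$ sufficiently small the map $x\mapsto\psi(x)+y$ sends a closed ball into itself and is a $\tfrac12$-contraction; since $E$ is complete, the Banach fixed-point theorem produces a unique solution $x=\varphi^{-1}(y)$, and the fixed-point estimate shows $\varphi^{-1}$ is Lipschitz, that $V=\varphi(B(0,r))$ contains a neighbourhood of each of its points, hence is open, and that $\varphi\colon B(0,r)\to V$ is a homeomorphism.

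Finally I would establish the regularity of the inverse. The linear core is that whenever $L\in\mathbb{L}_{\sigma,\varrho}(E,E)$ satisfies $\mathpzc{Lip}(\ii-L)_{\sigma,\varrho}<1$, then $L$ is invertible with $L^{-1}=\sum_{n\geq 0}(\ii-L)^n\in\mathbb{L}_{\sigma,\varrho}(E,E)$, the series converging by the composition estimate for $\mathbb{L}_{\sigma,\varrho}$, and moreover inversion $L\mapsto L^{-1}$ is itself $\mc$ on this open set. Applying this to $L=\varphi'(x)$ for $x\in B(0,r)$ shows $\varphi'(x)$ is invertible in $\mathbb{L}_{\sigma,\varrho}$. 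Differentiating $\varphi\circ\varphi^{-1}=\ii$ then gives that $\varphi^{-1}$ is $MC^1$ with $(\varphi^{-1})'(y)=\varphi'(\varphi^{-1}(y))^{-1}$, and feeding this identity back through the chain rule, using that $\varphi'$ is $MC^{k-1}$ (here $k\geq 2$ is used) and that inversion is $\mc$, bootstraps the regularity of $\varphi^{-1}$ from $MC^1$ up to $\mc$, which completes the argument.

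The step I expect to be the main obstacle is the second one: in a Fr\'echet space there is no norm, so the passage from the derivative bound $\mathpzc{Lip}(\psi')\leq\tfrac12$ to an honest contraction in the metric $\varrho$ requires a mean-value inequality compatible with both the translation-invariant metric and the Lipschitz structure on $\mathbb{L}_{\sigma,\varrho}$. This is precisely where ordinary Keller differentiability breaks down and the strengthened $\mc$-differentiability is indispensable, and aligning the constants so that the fixed-point argument and the subsequent Lipschitz control of $\varphi^{-1}$ both go through is the technical heart of the proof.
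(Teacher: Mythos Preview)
The paper does not contain a proof of this theorem: it is quoted verbatim as \cite[Theorem 4.10]{ml} and used as a black box (for Proposition~\ref{expo} and Theorem~\ref{23}), so there is no ``paper's own proof'' against which your proposal can be compared. Your outline is the standard contraction-mapping template transported to the $MC^k$-setting, which is indeed the route taken in M\"uller's original paper~\cite{ml}; the reduction to $\varphi'(0)=\ii$, the fixed-point argument for $x\mapsto y+\psi(x)$, and the bootstrap $(\varphi^{-1})'(y)=\varphi'(\varphi^{-1}(y))^{-1}$ are exactly the ingredients used there.

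One caution on your sketch: the step you flag as the obstacle is real, but the resolution is slightly different from what you suggest. The metric $\varrho$ is bounded by $1$ and is not positively homogeneous, so a naive mean-value inequality of the form $\varrho(\psi(x_1),\psi(x_2))\leq\sup\mathpzc{Lip}(\psi')\cdot\varrho(x_1,x_2)$ does not follow directly from the definition of $\mathpzc{Lip}$; likewise the composition estimate underlying your Neumann series is not a straightforward submultiplicativity of $\mathpzc{Lip}$. In~\cite{ml} these issues are handled by working with the individual seminorms $\sn\cdot\sn^n$ rather than with $\varrho$ itself, and the metric estimates are recovered only at the end. If you intend to write out the details, that is where the care has to go.
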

In this theorem a ball is defined with respect to a metric that induces the same manifold topology, we shall use a Finsler metric.
As a consequence of this theorem, we shall prove that exponential maps are local diffeomorphisms at the identity. 

We stress again none of the above results and the ones that we shall prove are true for  Fr\'{e}chet manifolds in general. 
Most concepts and results from finite dimensional differential geometry cannot be generalized trivially and without
restrictive approaches to Fr\'{e}chet manifolds. Apart from the concepts that depend on the finite-dimensionality, there are obstructions of intrinsic character which are mainly related to dual spaces. The dual of a Fr\'{e}chet space (non-Banachable) is never a Fr\'{e}chet space and cotangent bundles do not admit differentiable (in any sense) manifold structures, see~\cite{neeb2}. Therefore, some concepts such as the musical isomorphism and strong Riemannian metrics are not at hand. Other obstacles are of analytic nature which are caused by the lack of general solvability of differential equations and the absence of an inverse function theorem in general, therefore geometrical objects such as geodesics, exponential maps and parallel translation may not exist. In this paper we overcome the latter drawbacks by working out in the category of $\mc$-manifolds.

\section{Geodesics of sprays} 
Let $M$ be an $ \mc $-Fr\'{e}chet manifold modeled on $ F $ and let $ \pi: TM \to M$ be its tangent bundle.
Suppose $ X $ is an $ \mc $-vector field $X: M \to TM $, $ k\geq 1 $. 

Let $ U $ be open, $ x \in U \subset M$ and $I_a = (-a, a), \, a \in (0,\infty]$. A local flow of $ X $ at $ x $ is an $\mc$-function $$ \ff :U \times I_a \to M$$
such that
\begin{enumerate}
	\item for each $ x \in U, \ell_x : I_a \to M$ defined by $\ell_x(t) = \ff(x,t)$ is an integral curve
	of $X$ at $ x $,
	\item if $ \ff_t: U \to M $ is $ \ff_t(x) = \ff(x,t) $ then for $ t \in I_a $, $ \ff_t(U) $ is open and
	$ \ff_t $ is an $ \mc $-diffeomorphism onto its image.
\end{enumerate}
 For $ t+s \in I_a$ we have $ \ff_{t+s}(x) =\ell_{x}(t+s) $. But 
$ \ff_{t}(\ff_{s}(x)) = \ff_{t}(\ell_x(s)) $ is the integral curve through $ \ell_{x}(s) $, and 
$ \ell_{x}(t+s) $ is also an integral curve at $ \ell_{x}(s) $ so by Theorem~\ref{2} they coincide, and on $U$ $$\ff_{t}(\ff_{s}(x))=\ell_{x}(t+s)=\ff_{t+s}(x), $$ therefore, $ \ff_{s} \circ \ff_t = \ff_{s+t}=\ff_{t+s}= \ff_t \circ \ff_{s}$.
Since $\ell_{x}(t)$ is a curve at $x$, $ \ell_{x}(0)=x $, so $ \ff_0 $ is the identity. Moreover, 
$\ff_{t} \circ \ff_{-t} = \ff_{-t} \circ \ff_{t}$ is the identity therefore, if $$ V_t = \ff_{t}(U) \bigcap U \neq \emptyset, $$ then 
$ \ff_{t} \mid_{V_{-t}}  : V_{-t} \to V_{t} $
is a diffeomorphism and its inverse is $ \ff_{-t} \mid_{V_{t}} $.

Now we prove that an $\mc$-vector field $X : M \to TM$ has  a unique local flow.

\begin{theorem}\label{t3}
	Let $ X $ be an $ \mc $-vector field on  $ M $. For each $ x \in M $
	there exists an $\mc$-local flow of $ X $ at $ x $. Let $ \ff_1 : U_1 \times I_1 \to M $ and $ \ff_2 : U_2 \times I_2 \to M $
	be two local flows then they are equal on $ (U_1 \cap U_2)\times (I_1 \cap I_2) $.
\end{theorem}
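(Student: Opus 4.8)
The plan is to reduce the statement to the Fr\'{e}chet-space case already settled in Theorem~\ref{flow} by working inside a single coordinate chart, and then to upgrade the resulting map to a genuine local flow on $M$ using the uniqueness of integral curves supplied by Theorem~\ref{2}. First I would fix $x \in M$ and a chart $(W,\phi)$ with $x \in W$ and $\phi : W \to \phi(W) \subseteq F$ an $\mc$-diffeomorphism onto an open set. Pushing $X$ forward by $\phi$ yields an $\mc$-vector field $\tilde{X} = \dd\phi \circ X \circ \phi^{-1}$ on $\phi(W)$. Applying Theorem~\ref{flow} to $\tilde{X}$ produces a number $a>0$ and an $\mc$-map $\tilde{\ff} : I_a \times \phi(W) \to \phi(W)$ whose curves $\tilde{\ell}_y(t) = \tilde{\ff}(t,y)$ are the integral curves of $\tilde{X}$ (these remain in $\phi(W)$ for all $t \in I_a$, since the field is only defined there). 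Setting $U = W$ and $\ff(p,t) = \phi^{-1}\bigl(\tilde{\ff}(t,\phi(p))\bigr)$ gives an $\mc$-map $\ff : U \times I_a \to M$; because $\phi$ conjugates $X$ to $\tilde{X}$ and $\phi,\phi^{-1}$ are $\mc$, each $\ell_p(t) = \ff(p,t)$ is the integral curve of $X$ at $p$, which is condition~(1) of the definition.

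Next I would verify condition~(2). The key observation is that for $s,t,s+t \in I_a$ the maps obey the group law $\ff_s \circ \ff_t = \ff_{s+t}$: both $\ff_s(\ff_t(p))$ and $\ff_{s+t}(p)$ are integral curves of $X$ through $\ff_t(p)$, so they coincide on the overlap of their domains by Theorem~\ref{2}. Taking $s = -t$ for $t \in I_a$ gives $\ff_t \circ \ff_{-t} = \ff_{-t} \circ \ff_t = \ff_0 = \mathrm{id}_U$, so $\ff_t$ and $\ff_{-t}$ are mutually inverse $\mc$-maps of $U$ onto itself. Hence $\ff_t(U) = U$ is open and $\ff_t$ is an $\mc$-diffeomorphism onto its image, establishing~(2) and completing the existence part. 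This argument deliberately avoids any open-mapping or implicit-function machinery by producing the inverse $\ff_{-t}$ explicitly; alternatively, since $\ff_0' = \mathrm{id}$ is an isomorphism, one could invoke Theorem~\ref{inv} for small $t$.

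For uniqueness, let $\ff_1 : U_1 \times I_1 \to M$ and $\ff_2 : U_2 \times I_2 \to M$ be two local flows and fix $p \in U_1 \cap U_2$. By condition~(1) both $t \mapsto \ff_1(p,t)$ and $t \mapsto \ff_2(p,t)$ are integral curves of $X$ at $p$, defined on $I_1$ and $I_2$ respectively, so Theorem~\ref{2} forces them to agree on $I_1 \cap I_2$. As $p$ was arbitrary in $U_1 \cap U_2$, this gives $\ff_1 = \ff_2$ on $(U_1 \cap U_2) \times (I_1 \cap I_2)$, which is the second assertion.

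I expect the main obstacle to be condition~(2): in the $\mc$ category one has no general open-mapping theorem, so the content lies in exploiting the group law to exhibit $\ff_{-t}$ as an explicit $\mc$-inverse of $\ff_t$, and in managing the time interval and neighborhood so that the group law genuinely applies on a fixed open set. The remaining care is bookkeeping, namely checking that the integral curves and their joint $\mc$-regularity are preserved under conjugation by the chart maps $\phi,\phi^{-1}$; once the explicit inverse is in hand, these points are routine.
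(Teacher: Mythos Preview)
Your proposal is correct and takes essentially the same route as the paper: reduce to a chart, apply Theorem~\ref{flow} to the local representative of $X$, pull the resulting flow back to $M$, verify condition~(2) by exhibiting $\ff_{-t}$ as an explicit $\mc$-inverse of $\ff_t$ via the group law, and obtain uniqueness from Theorem~\ref{2} applied pointwise. The only cosmetic difference is in the domain bookkeeping: the paper applies Theorem~\ref{flow} on a smaller $V\subset\psi(U)$ and then shrinks once more (to $W$ and $I_b\subset I_a$) to guarantee that $\overline{\ff}_t(W)$ actually lies in the domain of $\overline{\ff}_{-t}$, whereas you take the full chart domain and rely on the flow remaining inside it.
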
 
\begin{proof}
	(Uniqueness). For each $ u \in U_1 \bigcap U_2$ we have $\ff_1 \mid_{\{u\}\times I} = \ff_2 \mid_{\{u\}\times I}$, where $I = I_1 \bigcap I_2$.
	This follows from Theorem~\ref{2} and the definition of local flows. Thus, $\ff_1 = \ff_2$ on the set $ (U_1 \cap U_2)\times I $.
	
	(Existence). In order to prove the existence we use the local representation. Let $ (x \in U, \psi) $ be a chart and let
	$ \ff : V \times I_a \to F $ be the local flow of the local representative of $ X $ at $ \psi(x) $
	given by Theorem~\ref{flow} with
	$$
	I_{a} = (-a , a), \quad V \subset \psi (U),\quad \ff (V \times I_a) \subset \psi(U).
	$$
	Define 
	\begin{equation*}
	\begin{array}{cccc}
	\overline{\ff} : \psi^{-1}(V) \times I_a \to M \\
	(u,t) \to \psi^{-1}(\ff (\psi (u),t)). 
	\end{array}
	\end{equation*}
	Since  $ \overline{\ff} $  is continuous, there exist an open neighborhood $  W  \subset \psi^{-1}(V)$ of $x$ and $ 0 < b < a$ 
	such that $$ \overline{\ff} (W \times I_{b}) \subset \psi^{-1}(V). $$
	The restriction of $ \overline{\ff} $ to $ W \times I_{b} $ is the local flow of $ X $ at $ x $. By the construction,
	$ \overline{\ff} $ is $ \mc $.
	The first condition of the definition of local flows holds because it is true for the local representative. To prove the second condition of the definition, note that for each $ t \in I_{b}$, $ \overline{\ff}_{t}$  has an $\mc$ inverse $  \overline{\ff}_{-t}$ on 
	$\psi^{-1}(V) \bigcap \overline{\ff}_{t}(W) = \overline{\ff}_{t}(W) $.
	It follows that $\overline{ \ff_t}(W) $ is open. And, since $\overline{\ff}_t$ and $\overline{\ff}_{-t}$ are both of class $\mc$, $\overline{\ff}_t$ is a $\mc$-diffeomorphism.
\end{proof}  
It follows from Theorem~\ref{2} that the union of the domains of all integral curves of an $ \mc $-vector field $X: M \to TM (k\geq 1) $ through $x \in M$  is an open interval which we denote by $I_x = (T_x^-,T_x^+)$, where $T_x^-$ (resp. $T_x^+$) are
the sup (resp., inf ) of the times of existence of the integral curves.

 Let  $\mathcal{D}_{X} \coloneq  \bigcup_{x \in M}(\{x\} \times  I_x ) $, then we have a map $\ff : \mathcal{D}_{X} \to M$ defined on the entire 
 $\mathcal{D}_{X}$ such that $\ff(x,t)$ is the local flow of $X$ at $x$. We call this the flow determined by $X$, and we call $\mathcal{D}_{X}$
 the domain of the flow. We prove that the sets
\begin{equation*}
M_t = \{x \in M \mid (x,t) \in \mathcal{D}_{X} \}
\end{equation*}
are open subsets of $ M $.

\begin{lemma}\label{3}
	The domain $\mathcal{D}_{X}$ is open in $M \times \rr$. Moreover, the set $ M_t $ is open in $M$ for each $t \in \rr$.
\end{lemma}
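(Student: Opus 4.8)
The plan is to deduce everything from the openness of $\mathcal{D}_X$ in $M \times \rr$. First I would observe that for each fixed $t$ the inclusion $\iota_t : M \to M \times \rr$, $x \mapsto (x,t)$, is continuous and that $M_t = \iota_t^{-1}(\mathcal{D}_X)$; hence once $\mathcal{D}_X$ is known to be open, the openness of every $M_t$ is automatic, and it suffices to fix a point $(x_0,t_0) \in \mathcal{D}_X$ and exhibit an open box around it lying inside $\mathcal{D}_X$. Replacing $X$ by $-X$ if necessary I may assume $t_0 \geq 0$; the case $t_0 = 0$ is immediate since $\ff_0 = \mathrm{id}$ and Theorem~\ref{t3} already furnishes a local flow on a neighborhood of $x_0$, so I would concentrate on $t_0 > 0$.

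Since $t_0 \in I_{x_0} = (T_{x_0}^-, T_{x_0}^+)$ is an open interval, I would choose $t_1$ with $t_0 < t_1 < T_{x_0}^+$, so that $\ell_{x_0}$ is defined on the compact interval $[0,t_1]$ and its image $K = \ell_{x_0}([0,t_1])$ is compact. By Theorem~\ref{t3} every point of $K$ admits an $\mc$-local flow on a neighborhood for times in some interval $(-a_p, a_p)$; extracting a finite subcover of $K$ and taking $a$ to be the minimum of the finitely many $a_p$ yields a single $a > 0$ such that, for every $s \in [0,t_1]$, the flow $\ff_\tau$ is defined and $\mc$ on a neighborhood of $\ell_{x_0}(s)$ for all $|\tau| < a$. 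I would then fix an integer $N$ with $h = t_0/N < a$ and set $y_j = \ell_{x_0}(jh)$ for $j = 0, \dots, N$, so that $y_0 = x_0$ and $y_N = \ell_{x_0}(t_0)$. By the uniqueness of integral curves (Theorem~\ref{2}) the shifted curve $s \mapsto \ell_{x_0}(jh+s)$ is the integral curve through $y_j$, whence $\ff_h(y_j) = y_{j+1}$ and, on a suitable neighborhood, $\ff_{t_0}$ agrees with the $N$-fold composition $\ff_h \circ \cdots \circ \ff_h$, in accordance with the group law $\ff_{s+t} = \ff_s \circ \ff_t$ established above.

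The heart of the argument is a finite induction producing an actual open neighborhood on which this composition is defined. Starting from a neighborhood $W_0 \ni x_0$ on which $\ff_h$ is $\mc$ with $\ff_h(x_0) = y_1$, I would use continuity of $\ff_h$ to shrink to $W_0' \subset W_0$ with $\ff_h(W_0')$ inside a flow neighborhood of $y_1$; composing and shrinking $N$ times gives an open $\tilde W \ni x_0$ on which $\ff_{t_0}$ is defined as the $N$-fold composition of $\ff_h$ and, being a finite composition of $\mc$-maps (hence $\mc$ by the chain rule for bounded differentiability), is itself $\mc$ with $\ff_{t_0}(x_0) = y_N$. This already shows $\tilde W \subset M_{t_0}$. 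To capture nearby times I would write $\ff_{t_0 + r} = \ff_r \circ \ff_{t_0}$; since $y_N = \ell_{x_0}(t_0) \in K$ lies in a flow neighborhood on which $\ff_r$ is defined for $|r| < a$, continuity of $\ff_{t_0}$ lets me shrink $\tilde W$ to $\hat W$ with $\ff_{t_0}(\hat W)$ inside that neighborhood, so that $\ff_{t_0+r}(x)$ is defined for all $x \in \hat W$ and $|r| < a$. Uniqueness of integral curves identifies this with $\ell_x(t_0+r)$, giving $\hat W \times (t_0 - a, t_0 + a) \subset \mathcal{D}_X$ and proving openness at $(x_0,t_0)$.

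The step I expect to be the main obstacle is securing the uniform existence time $a$ and then carrying out the inductive shrinking cleanly: one must ensure each partial composition $\ff_{jh}$ lands inside the next flow chart before applying $\ff_h$ again, and that $\mc$-regularity is preserved at every stage. In the Fr\'{e}chet setting regularity is not automatic from continuity, so I would lean on the fact that $\mc$-maps are closed under composition; everything else is a transcription of the Banach-space argument, with the compactness of the curve segment $K$ doing the essential work of reducing an a priori global statement to finitely many applications of the local flow theorem.
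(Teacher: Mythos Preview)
Your argument is correct and complete: the compactness of the trajectory segment $K=\ell_{x_0}([0,t_1])$, the extraction of a uniform flow time $a$ from a finite subcover, the $N$-fold composition with successive shrinking, and the final extension $\ff_{t_0+r}=\ff_r\circ\ff_{t_0}$ all go through in the $MC^k$ Fr\'{e}chet setting exactly as you describe, since continuous images of compact intervals are compact and $MC^k$-maps are closed under composition.

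The paper, however, takes a different route, following Lang: for each $x$ it defines $J_x\subseteq I_x$ to be the set of times $t$ admitting a product neighborhood $U\times(t-a,t+a)\subset\mathcal{D}_X$ on which $\ff$ is $MC^k$, observes that $J_x$ is open and nonempty by the local flow theorem, and then shows $J_x$ is closed in $I_x$ by taking $s\in\overline{J_x}$, invoking a local flow box $\mathbb{E}$ at $\ff(x,s)$, choosing $t_1\in J_x$ close to $s$, and gluing via $\mathcal{F}(w,t)=\mathbb{E}(\ff(w,t_1),t-t_1)$. Connectedness of $I_x$ then gives $J_x=I_x$. The two proofs are the two standard textbook strategies: yours packages the globalization as a finite iteration controlled by compactness, while the paper's packages it as an open--closed argument on the interval. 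Your version makes the $MC^k$-regularity of the extended flow transparently a chain-rule consequence; the paper's version avoids having to track a shrinking chain of neighborhoods but requires a slightly more delicate gluing step to see that the extension matches the original flow. Either is perfectly adequate here.
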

\begin{proof}
	We follow the idea of~\cite[Theorem 2.6]{l}.	
	Let $ x \in M $ and let $ J_x \subseteq I_x $  be the set of points for which $ U \times (t-a,t+a)  \subseteq \mathcal{D}(X) $ for some positive number $ a $ and an open neighborhood $ x \in U $, and such that the restriction of the flow $ \ff $
	of $ X $ to this product is an $ MC^{k}$-map. Then, the interval $ J_x $ is open
	in $ I_x $ and it contains zero by Theorem~\ref{t3}. 
	
	We show that $ J_x $ is closed in $ I_x $  too. 
	Let $ s $ belong to its closure $ \overline{J_x} $.  By Theorem~\ref{t3} we can find a neighborhood $ V $ for 
	$ \ff (x,s) $ such that there is a unique  $\mc$- local flow $$ \mathbb{E} :  V \times I_b \to M, $$ for some positive number $ b $
	and $ \mathbb{E}(v,0) = v $ for all $ v \in V $. 
	
	Let a neighborhood $ \ff(x,s) \in V_1 \subseteq V $ be small
	enough. By the definition of $ J_x $, there exist $ t_1 \in J_x $ close enough to $ s $ and a small number $ \bar{a} $ and a
	small enough neighborhood $ x \in W $ such that on this product $\ff$ is $\mc$ and $$   \ff (W \times(t_1-\bar{a}, t_1+\bar{a}))\subseteq V_1 .$$
	
	Define $$ \mathcal{F} (w,t) = \mathbb{E} (\ff(w,t_1),t-t_1) $$ for $ w \in W $ and $ t $ belongs to the translation
	of $ I_b $ by $ t_1$, $  I_b+t_1  $. Then $$ \mathcal{F}(w,t_1) = \mathbb{E}(\ff(w,t_1),0) = \ff(w,t_1), $$ and by the chain rule (\cite[Lemma B.1 (f)]{hg}
	\begin{align}
	\dfrac{\dd}{\dd t}\mathcal{F}(w,t) &= \dd_2 \mathbb{F}(w,t) \circ \dd_2 \mathbb{E} (\ff(w,t_1),t-t_1,) \nonumber \\
	&=X (\mathbb{E}(\ff(w,t_1),t-t_1) = X (\mathcal{F}(w,t)) \nonumber.
	\end{align}
	Therefore, both $ \ff (x,t) $ and $ \mathcal{F}(x,t) $ are integral curves of $ X $ with $$ \ff(x,t_1) = \mathcal{F}(x,t_1) .$$
	Thus, they coincide on the intersection of their domains and $ \mathcal{F}(t,x) $ is an extension of
	$\ff(x,t)$ to a bigger interval containing $ s $, therefore, $ J_x $ is closed in $ I_x $ and consequently $ J_x =I_x $. 
	Since $ \ff $ is $ \mc $  on $ W \times (t_1-\bar{a},t_1+\bar{a})$
	it follows that $ \mathcal{F} $ is $ \mc $ on $ W \times (I+t_1)  $. Whence,  $\mathcal{D}(X) $ is open in $  M \times \rr $
	and consequently $ M_t $ is open in $ M $, and $ \ff $ is of class $ MC^{k} $ on the whole domain $ \mathcal{D}(X) $. 
\end{proof}
 The double tangent bundle $T(TM)$ over $TM$ has two vector bundle structure, one determined by the natural projection $\pi_{TM}:T(TM) \to TM$ (see~\cite[Theorem 3.1]{k1}) and the other by the tangent map $\pi_{*} =T\pi  : T(TM) \to TM$. Indeed, the tangent map is a vector bundle morphism (the arguments for Banach manifolds are valid for $ M $, see~\cite[Page 52]{l}).

Suppose $ M $ is of class $ \mc, \, k\geq 3 $.
Let $\alpha: I \to M$ be an $MC^l (l\geq 2)$-curve, a lift of $\alpha$ into $TM$ is a curve $\hat{\alpha} : I \to TM$ such that $\pi \hat{\alpha} = \alpha$. The derivative
$\alpha' : I \to TM$ is called the canonical lift. A second order vector field over $M$ is a vector field $\vf : TM \to T(TM)$ such that $$\pi_{*} \circ \vf =Id_{TM}.$$
An integral curve $\imath : I \to TM$ of $\vf$ is equal to the canonical lift of $\pi \imath$, that is $$(\pi \imath)'= \imath.$$

A geodesic with respect to $\vf$ is a curve $\mathbf{g}: I \to M$ such that its derivative $\mathbf{g}' : I \to TM$ is an  integral curve of $\vf$, that is $\mathbf{g}'' =\vf (\mathbf{g}')$. 

Let $ s\neq 0 \in \rr$ be fixed, define the mapping 
\begin{equation*}
\begin{array}{cccc}
L_s: TM \to TM \\ v \mapsto sv.
\end{array}
\end{equation*}
A second order vector filed $\s : TM \to T(TM)$ is said to be spray if 
\begin{enumerate}
	\item $\pi_{*}\s(v)=v$,
	\item $\s(sv) = (L_s)_{*}(s\s(v))$ for
	all $ s \in \rr $ and $ v \in TM $.
\end{enumerate}

If a manifold admits a partition of unity, then there exists a spray over $ M $, cf.~\cite[Theorem 3.1]{l}. Let $ U \times F $ be a chart for $ TM $ and let $ \phi: U \times F \to F \times F $
with $ \phi = (\phi_1,\phi_2) $ be a map. By repeating the arguments of ~\cite[Proposition 3.2]{l} and the remarks after it we obtain that $ \phi $ represents a spray $ \s $ if and only if $\phi_1(x,v)=v$ and
$$
\phi_2 (x,v) = \dfrac{1}{2}\dd_2^2\phi_2(x,0)(v,v).
$$
Thus, at $ x \in U $ in the chart the spray is determined by a symmetric bilinear map 
\begin{equation}\label{sr}
\mycmd{S}(x)= \dfrac{1}{2}\dd_{2}^2\phi_2(x,0).
\end{equation} 
Let $\s$ be a spray over $M$. If $\imath : I \to TM$ is an integral
curve of $\s$, then  $\imath$ is the canonical lift of the curve
$\ell \coloneq \pi \circ \imath : I \to M $, that is, $\imath = \ell'$. Thus, $ \ell $ is a geodesic of $\s$ because
$\ell'' = \imath' = \s \circ \imath = \s \circ \ell'$.
If $\ell : I \to M$ is a geodesic of $\s$, then its canonical lift $\imath = \ell'$ is
an integral curve of $\s$.
Therefore, a curve $\ell: I \to M$ is
a geodesic of $\s$ if, and only if, $\ell'$ is an integral curve of $\s$.
\begin{lemma}
	Let $\s$ be a spray of class $\mc$, $ k\geq 2 $, over $M$. If $ x \in M $ and $v$ is a tangent vector in $T_xM$, then there exists the unique integral curve $\imath: I \to TM$ of $\s$ such that $\imath(0)=v$.
\end{lemma}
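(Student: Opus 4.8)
The plan is to reduce the statement to the existence and uniqueness of integral curves for the spray $\s$, which is itself an $\mc$-vector field, and then invoke the results already established in the excerpt. Since $\s : TM \to T(TM)$ is a second order vector field of class $\mc$ with $k \geq 2$, it is in particular a vector field of class $MC^{k-1}$ on the manifold $TM$. The tangent vector $v \in T_xM$ is a point of the total space $TM$, so what we seek is precisely an integral curve $\imath : I \to TM$ of the vector field $\s$ on $TM$ passing through the point $v$ at time $0$.

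First I would verify that $\s$ qualifies as an $MC^{l}$-vector field (for $l = k-1 \geq 1$) on $TM$ in the sense required by Theorem~\ref{2}. This is immediate from the definition of a spray: $\s$ is a second order vector field of class $\mc$, and $TM$ is an $MC^{k-1}$-Fr\'{e}chet manifold by the vector bundle structure cited from \cite[Theorem 3.1]{k1}. Then I would apply Theorem~\ref{2} directly with $M$ replaced by $TM$, $X$ replaced by $\s$, and the base point $x$ replaced by $v$. Theorem~\ref{2} guarantees both the existence of an integral curve $\imath$ of $\s$ at $v$ and that any two such integral curves agree on the intersection of their domains; uniqueness on a common interval $I$ follows at once.

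The one point requiring a sentence of care is the passage from $\mc$-differentiability of $\s$ as a second order field to its being a bona fide $MC^{k-1}$-vector field on $TM$ to which Theorem~\ref{2} applies. Here I would note that a second order vector field is by definition a map $\s : TM \to T(TM)$ satisfying $\pi_{*} \circ \s = Id_{TM}$, hence a section of the tangent bundle of $TM$, and its $\mc$-regularity is inherited directly. So the verification is purely formal: the hypotheses of Theorem~\ref{2} are met with the manifold taken to be $TM$.

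I do not expect a genuine obstacle in this statement, as it is essentially a specialization of the integral-curve theorem to the manifold $TM$ and the spray $\s$. The only thing to be mindful of is the bookkeeping on the differentiability index: to apply Theorem~\ref{2} we need $\s$ to be of class $MC^{l}$ with $l \geq 1$, and since $\s$ is assumed $\mc$ with $k \geq 2$, viewing it as a field on $TM$ leaves us with index $k-1 \geq 1$, which is exactly the threshold Theorem~\ref{2} demands. Thus the proof amounts to identifying the correct ambient manifold $TM$, checking the regularity hypothesis, and quoting Theorem~\ref{2}.
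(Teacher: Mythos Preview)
Your proposal is correct and follows essentially the same approach as the paper: both recognize that $\s$ is a vector field on $TM$ and invoke Theorem~\ref{2} at the point $v\in TM$ to obtain existence and uniqueness of the integral curve. You are in fact more explicit than the paper about the differentiability bookkeeping (the drop from $MC^k$ to $MC^{k-1}$ when passing to $TM$); the paper simply asserts that Theorem~\ref{2} applies, and then appends a short remark translating the conclusion into uniqueness of the associated geodesic $\ell=\pi\circ\imath$, which is not strictly required by the lemma as stated.
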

\begin{proof}
	The spray $ \s $ is a vector field on $ TM $ so by Theorem~\ref{2} it has a unique  integral curve $ \imath : I \to TM $ such that $ \imath (0)=v $. The integral curve $ \imath $ is the canonical lift of the  geodesic $ \ell = \pi \circ \imath $ and $ \ell'(0)= \imath(0)=v $.
	
	If $ \ell_1 : J  \to M$ is another  geodesic with $ \ell_1'(0)=v $, then $ \imath_1 = \ell_1' $
	is also an integral curve of $ \s $ such that $ \imath_1(0)=v $ and so $ \imath_1 = \imath $.
\end{proof}
Let $ v \in TM $. By the previous lemma there exists a unique  integral curve  
$\imath_v: I_v \to TM$ of $\s$ such that $\imath_v(0)=v$. For  $ v \in TM $ we have the following result:
\begin{lemma}\label{ex}
	Let $ s,t \in \rr $, then for a fixed $ s$ and all $ t $ such $ st \in I_v $ we have $$ \ell_{sv}(t) =s \ell_{v} (st).$$	
\end{lemma}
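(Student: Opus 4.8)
The plan is to reduce everything to the uniqueness of integral curves of the spray $\s$ (the preceding lemma) together with the homogeneity axiom (2) in the definition of a spray. Fix $s \neq 0$ and, on the open interval $\{\,t \in \rr : st \in I_v\,\} = \tfrac{1}{s}\,I_v$, define the auxiliary curve
\[
\gamma(t) \coloneq L_s\big(\imath_v(st)\big) = s\,\imath_v(st) \in TM .
\]
First I would record the initial value $\gamma(0) = L_s(\imath_v(0)) = L_s(v) = sv$, and note that $\gamma$ is of class $\mc$, being a composition of $\imath_v$, the reparametrization $t \mapsto st$, and the (linear, hence $\mc$) bundle map $L_s$.

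The heart of the argument is to show that $\gamma$ is itself an integral curve of $\s$. Differentiating with the chain rule \cite[Lemma B.1]{hg} and using that $t\mapsto st$ contributes a factor $s$,
\[
\gamma'(t) = (L_s)_{*}\big(s\,\imath_v'(st)\big).
\]
Since $\imath_v$ is an integral curve of $\s$, we have $\imath_v'(st) = \s(\imath_v(st))$. Writing $w = \imath_v(st)$ and invoking the spray homogeneity $\s(sw) = (L_s)_{*}(s\,\s(w))$ of axiom (2),
\[
\gamma'(t) = (L_s)_{*}\big(s\,\s(w)\big) = \s(sw) = \s\big(L_s(\imath_v(st))\big) = \s(\gamma(t)).
\]
Thus $\gamma$ is an integral curve of $\s$ with $\gamma(0) = sv$, so by uniqueness it coincides with $\imath_{sv}$ wherever both are defined, giving $\imath_{sv}(t) = s\,\imath_v(st)$; maximality of the domains together with the same argument applied to $s^{-1}$ shows the two parameter intervals correspond, so the identity holds for every $t$ with $st \in I_v$.

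Finally I would project to $M$. Because $L_s$ is fibre-preserving we have $\pi \circ L_s = \pi$, and since $\ell_v = \pi \circ \imath_v$ by construction,
\[
\ell_{sv}(t) = \pi\big(\imath_{sv}(t)\big) = \pi\big(s\,\imath_v(st)\big) = \pi\big(\imath_v(st)\big) = \ell_v(st),
\]
which is the asserted homogeneity of geodesics; the scaling factor $s$ lives entirely in the fibre direction of the lift $\imath_{sv}$ and disappears under $\pi$. The only genuinely delicate point is analytic rather than conceptual: justifying the computation of $\gamma'$ in the bounded Fréchet ($\mc$) category — in particular that the tangent map $(L_s)_{*}$ interacts correctly with scalar multiplication of tangent vectors and that the reparametrization derivative behaves as expected — for which I would appeal to the $\mc$-chain rule cited above. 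Everything else is a direct application of uniqueness and the two defining properties of $\s$.
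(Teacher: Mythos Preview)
Your core argument is correct and is exactly the paper's approach: form the curve $t \mapsto s\,\imath_v(st)$, verify via the chain rule and the spray homogeneity axiom $(2)$ that it is an integral curve of $\s$ with initial value $sv$, and conclude by uniqueness that it coincides with $\imath_{sv}$. The paper's proof is precisely this computation, written with $\ell_v$ in place of your $\imath_v$.

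The discrepancy in your final paragraph is a notational misreading rather than a mathematical error. In this lemma (and in the definition of the exponential map immediately after it) the paper uses $\ell_v$ to denote the integral curve of $\s$ in $TM$ --- what you call $\imath_v$ --- \emph{not} the projected geodesic $\pi\circ\imath_v$ in $M$; note for instance that the paper writes $\exp(v)=\pi\ell_v(1)$, which would be redundant if $\ell_v$ already lived in $M$, and that the expression $s\,\ell_v(st)$ makes no sense for a curve valued in $M$. So the assertion $\ell_{sv}(t)=s\,\ell_v(st)$ is an identity of curves in $TM$, and your intermediate line $\imath_{sv}(t)=s\,\imath_v(st)$ \emph{is} the lemma. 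The subsequent projection step is unnecessary; the formula $\ell_{sv}(t)=\ell_v(st)$ you obtain (with the $s$ absorbed by $\pi$) is the correct homogeneity statement for the base geodesics, but it is not what was to be proved here. The paper's notation is admittedly inconsistent --- elsewhere $\ell$ denotes a curve in $M$ --- so the confusion is understandable.
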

\begin{proof}
	Let a fixed $ s $ be given and $ t\in \rr $ be such that $ st \in I_v $, then the curve $ \ell_{v}(st) $ is defined
	and
	\begin{equation}
	\dfrac{\dd}{\dd t}(s \ell_{v}(st)) = (L_s)_{*}s\ell_{v}'(st) = (L_s)_{*}s\s (\ell_{v}(st))=\s (s\ell_{v}(st)).
	\end{equation}
	Therefore, the curve $ s\ell_{v}(st) $ is a unique integral curve of $ \s $ such that $ s\ell_{v}(0) =sv $ and
	the uniqueness of the integral curve implies that $ \ell_{sv}(t) =s \ell_{v} (st)$.
\end{proof}
Let $ \s $ be a spray on $ M $ of class $ \mc $, $ k\geq2 $. Let $\ell_v  $ be the integral curve
of $ \s $ with the initial condition $ v \in TM $. 
Let $$ \mathcal{D} \coloneq \{ v\in TM \mid \ell_v  \textrm{ is defined at least on}\, [0,1]\} . $$
By Lemma~\ref{3}, $ \mathcal{D} $ is an open set in $ TM $ and $ v \mapsto \ell_v(1) $ is an $ \mc $-map.

We define the exponential map by 
\begin{equation}
\begin{array}{cccc}
\exp :\mathcal{D} \to M \\ \exp(v) = \pi \ell_v(1).
\end{array}
\end{equation}
We denote by $ \exp_x : T_x M \mapsto M $ the restriction to the tangent space $ T_xM $ for $ x \in M $.
By the definition of spray for $ s = 0 $  at the zero vector $0_x$ in $ T_xM $ we have $ \s (0_x) =0 $ so
$ \exp (0_x)=x $.

\begin{proposition}\label{expo}
	Let $ M $ be an $ \mc $-Fr\'{e}chet manifold, $k\geq 3$, and let $ \exp: \mathcal{D} \to M $ be the exponential map.
	Then for each $ x \in M $, $ \exp_{x}: T_xM \to M $ is a local diffeomorphism at $ 0_x $.	
\end{proposition}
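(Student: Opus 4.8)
The plan is to mirror the classical Banach-manifold argument of Lang~\cite{l}: compute the derivative $\dd(\exp_x)(0_x)$, show that it is the identity, and then quote the inverse function theorem (Theorem~\ref{inv}).

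First I would record the regularity of $\exp_x$. By Lemma~\ref{3} the assignment $v \mapsto \ell_v(1)$ is an $\mc$-map on the open set $\mathcal{D} \subset TM$; composing with the bundle projection $\pi$ and restricting to the fibre $T_xM$ (a closed subspace in a chart, hence an $\mc$-submanifold) keeps $\exp_x$ of class at least $MC^{2}$ on $\mathcal{D} \cap T_xM$. This is exactly where the hypothesis $k \geq 3$ is consumed, and it guarantees that $\exp_x$ meets the differentiability requirement ($k \geq 2$) of Theorem~\ref{inv}.

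Next I would compute $\dd(\exp_x)(0_x)$. Since $T_xM$ is a Fr\'{e}chet space, its tangent space at $0_x$ is canonically identified with $T_xM$ itself, so it suffices to evaluate the directional (G\^{a}teaux) derivative along the ray $s \mapsto sv$ for an arbitrary $v \in T_xM$. Applying Lemma~\ref{ex} with $t = 1$ gives $\ell_{sv}(1) = s\,\ell_v(s)$, and because the scalar multiplication $L_s$ preserves fibres we have $\pi\bigl(s\,\ell_v(s)\bigr) = \pi\,\ell_v(s)$; hence
\begin{equation*}
\exp_x(sv) = \pi\,\ell_{sv}(1) = \pi\bigl(s\,\ell_v(s)\bigr) = \pi\,\ell_v(s) = \mathbf{g}_v(s),
\end{equation*}
where $\mathbf{g}_v \coloneq \pi \circ \ell_v$ is the geodesic determined by $v$. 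Since $\ell_v$ is the canonical lift of $\mathbf{g}_v$, we have $\mathbf{g}_v'(0) = \ell_v(0) = v$, and differentiating the displayed identity at $s = 0$ yields
\begin{equation*}
\dd(\exp_x)(0_x)(v) = \frac{\dd}{\dd s}\Big|_{s=0} \mathbf{g}_v(s) = \mathbf{g}_v'(0) = v.
\end{equation*}
As $v$ was arbitrary, $\dd(\exp_x)(0_x) = \mathrm{Id}_{T_xM}$, which is in particular a topological isomorphism. I would then invoke Theorem~\ref{inv}: $\exp_x$ is of class at least $MC^{2}$ and its derivative at $0_x$ is an isomorphism, so there is an $r > 0$ (the ball being taken with respect to a Finsler metric inducing the manifold topology, as stipulated after Theorem~\ref{inv}) such that $\exp_x$ restricts to an $\mc$-diffeomorphism of $B(0_x, r)$ onto an open neighbourhood of $x = \exp_x(0_x)$ in $M$; that is, $\exp_x$ is a local diffeomorphism at $0_x$.

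The step I expect to demand the most care is the first one. In the Fr\'{e}chet category the differentiability of the flow cannot be taken for granted, and although Lemma~\ref{3} supplies it, the honest bookkeeping of differentiability orders --- passing from the $\mc$-structure of $M$ to that of $TM$, then to the spray $\s$, then to $\exp$, and finally to the restriction $\exp_x$ --- is the delicate point, and it is precisely what forces $k \geq 3$. Once this is settled, the derivative computation is formally identical to the Banach case; the only extra justification needed there is that, for the $MC^1$-map $\exp_x$, the full derivative agrees with the G\^{a}teaux derivative taken along rays, which is immediate from the definition of $MC^1$-differentiability.
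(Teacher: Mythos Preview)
Your proof is correct and follows essentially the same route as the paper: use Lemma~\ref{ex} to rewrite $\exp_x(sv)$ as $\pi\,\ell_v(s)$, differentiate at $s=0$ to obtain $\dd(\exp_x)(0_x)=\mathrm{Id}$, and conclude via Theorem~\ref{inv}. Your treatment is in fact more careful than the paper's about the regularity bookkeeping that forces $k\geq 3$, which the paper leaves implicit.
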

\begin{proof}
	Let $ v \in T_xM $ and let $I_v$ be an interval containing zero.  Consider the parameterized straight line 
	\begin{equation*}
	\begin{array}{cccc}
	\imath_v : I_v \to TM  \\ t \mapsto tv.
	\end{array}
	\end{equation*}
	In view of Lemma~\ref{ex} for $ s=1 $ we obtain $ \exp(tv) = \pi \ell_{tv}(1)=\pi \ell_{v}(t) $. Thereby,
	$$ (\exp(tv))' = (\pi \ell_{v}(t))' =  \ell_{v}(t),$$
	but 
	$$
	(\exp(tv))' = \exp_{*}i_v'(t). 
	$$
	Then, by evaluating at $ t= 0 $ we get $ (\exp_{*})(0_x) = Id$. Thus, the map $(\exp_{*})(0_x)$ is a linear isomorphism  and hence the inverse mapping theorem, Theorem ~\ref{inv}, implies that 
	$ \exp_{x}$ is a local diffeomorphism at $ 0_x $.
\end{proof}
Given a point $ x \in M $, by the preceding proposition and the inverse mapping theorem there exists a star-shaped open neighborhood $ \mathcal{W} $ of $ 0_x \in T_xM $ and an open neighborhood $ \mathcal{U} $  of $ x $ such that $ \exp_x : \mathcal{W} \to \mathcal{U}$ is a diffeomorphism.
The pair $ (\mathcal{U}, \mathcal{W} )$ is called a normal neighborhood of $ x $ in $ M $. 

We should note that our notion of a normal neighborhood differs from the normal coordinates in the classical sense. We shall give  normal neighborhoods in terms of the so-called injectivity radius later on.

\begin{proposition}\label{geod}
	Let $ x \in M $, $ v \in T_xM $ and $ \alpha_{v}(t) = \exp_x(tv) $. Then $ \alpha_{v}(t) $
	is a geodesic. Conversely, if $ \alpha: I \to M $ is an $ MC^2 $ geodesic with $ \alpha(0)=x $ and $ \alpha'(0)=v $.
	Then $ \alpha(t) = \exp_x(tv) $.  
\end{proposition}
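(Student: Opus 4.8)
The plan is to reduce both assertions to the defining correspondence between integral curves of the spray $\s$ and geodesics, together with the scaling identity of Lemma~\ref{ex} and the uniqueness of integral curves (Theorem~\ref{2}). The single computation on which everything rests is the one already carried out in the proof of Proposition~\ref{expo}, namely
\[
\exp_x(tv) = \pi\,\ell_{tv}(1) = \pi\,\ell_v(t),
\]
valid for every $t$ with $t \in I_v$ (equivalently $tv \in \mathcal{D}$); here I would apply Lemma~\ref{ex} with $s=t$ evaluated at parameter $1$, so that $\ell_{tv}(1)=t\,\ell_v(t)$ and $\pi$ kills the scalar factor. Since $\mathcal{D}$ is open in $TM$ (Lemma~\ref{3}) and contains $0_x$, the line $t \mapsto tv$ meets $\mathcal{D}$ in an open interval about $0$, so $\alpha_v(t)=\exp_x(tv)$ is indeed defined on a neighborhood of $0$.

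For the forward direction I would simply observe that $\ell_v$ is an integral curve of the spray $\s$, hence by the spray--geodesic correspondence recorded earlier in this section (an integral curve of $\s$ is the canonical lift of a geodesic, $(\pi\circ\ell_v)'=\ell_v$ and $(\pi\circ\ell_v)''=\s\circ(\pi\circ\ell_v)'$) the projection $\pi\circ\ell_v$ is a geodesic. By the displayed identity $\alpha_v=\pi\circ\ell_v$ on its interval of definition, and therefore $\alpha_v$ is a geodesic. No further computation is required.

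For the converse, let $\alpha: I \to M$ be an $MC^2$ geodesic with $\alpha(0)=x$ and $\alpha'(0)=v$. Because $\alpha$ is a geodesic, its canonical lift $\alpha':I\to TM$ is an integral curve of $\s$, and by hypothesis $\alpha'(0)=v$. By definition $\ell_v$ is the integral curve of $\s$ with $\ell_v(0)=v$; hence by uniqueness (Theorem~\ref{2}) the curves $\alpha'$ and $\ell_v$ agree on the intersection of their domains, and by maximality of $I_v$ we obtain $I\subseteq I_v$ with $\alpha'=\ell_v|_I$. Projecting and using $\pi\circ\alpha'=\alpha$, we get $\alpha=\pi\circ\ell_v$ on $I$, so the displayed identity yields $\alpha(t)=\pi\,\ell_v(t)=\exp_x(tv)$ for all $t\in I$.

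The argument presents no serious obstacle; the only point demanding care will be the bookkeeping of domains, namely verifying that $tv\in\mathcal{D}$ precisely when $t\in I_v$ so that $\exp_x(tv)$ and $\pi\,\ell_v(t)$ share the same interval of definition, and that the uniqueness theorem is invoked on the overlap $I\cap I_v$ before appealing to maximality. These are the same routine domain considerations already settled in Lemma~\ref{3} and Proposition~\ref{expo}, so I expect them to go through without difficulty.
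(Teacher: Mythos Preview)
Your proposal is correct and is precisely the standard argument the paper has in mind; indeed the paper's own proof reads in its entirety ``The proof is standard so we omit it.'' Your reduction to the identity $\exp_x(tv)=\pi\,\ell_v(t)$ via Lemma~\ref{ex}, followed by the spray--geodesic correspondence and uniqueness of integral curves (Theorem~\ref{2}), is exactly the expected route and matches the computation already carried out in Proposition~\ref{expo}.
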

\begin{proof}
	The proof is standard so we omit it.
\end{proof}
\section{Covariant derivatives} 
In this section, we work in the category of $MC^{\infty}$-Fr\'{e}chet manifolds. 

Let $ M $ be an $MC^{\infty}$-Fr\'{e}chet manifold modeled on a Fr\'{e}chet space $F $ and $ \mathcal{E}(M) $ the set of smooth real-valued maps on $ M $. Let $  \mathcal{V}(M) = MC^{\infty}(M \to TM) $ be the set of all $MC^{\infty}$-vector fields  and $X,Y \in \mathcal{V}(M)$. 

The Lie derivative 
of $ \varphi \in \mathcal{E}(M) $ with respect to a vector field $ X $ with the flow $ \ff $ is defined as usual by
\begin{equation*}
\begin{array}{cccc}
\mathcal{L}_X\varphi(x) = \lim_{t \to 0} \dfrac{\varphi(\ff(x,t))-\varphi(x)}{t}.
\end{array}	
\end{equation*}
It is easily seen that $ \mathcal{L}_X\varphi = X(\varphi) $  belongs to $ \mathcal{E}(M) $. 

Let $ (U_i,\psi_i) $ be an atlas of $ M $. We endow $  \mathcal{E}(\psi_i(U_i)) \coloneq \mathcal{E}(\psi_i(U_i), \rr) $ with the topology of uniform convergence  on compact sets, for the function and all its derivatives, that is, the weakest topology for which the maps 
$$
\varphi \to \dd^n \varphi \in C( \psi_i(U_i)\times F^n,\rr)
$$ 
are continuous, where $C(\psi_i(U_i)\times F^n, \rr)$ is the space of  continuous linear functions endowed with the compact-open topology.

Then, we equip $\mathcal{E}(M) $ with the weakest topology for which the maps $$ \varphi \mapsto \varphi \circ \psi_i^{-1} $$
from $\mathcal{E}(M) $ to $  \mathcal{E}(\psi_i(U_i))$ are continuous. The
topology of $\mathcal{E}(M) $ can also be viewed as the weakest topology for which the restrictions
$\mathcal{E}(M)  \to  \mathcal{E}(U_i) $) are continuous.
This topology is independent of the choice of atlas, see~\cite[Lemma 2]{n}.

 We identify $ \mathcal{V}(U_i) $ with $ \mathcal{E}(U_i, U_i \times F) $, then
we similarly define the topology of $ \mathcal{V}(M) $ to be the weakest topology for which the restrictions $ \mathcal{V}(M) \to  \mathcal{V}(U_i) $
are continuous, see~\cite[Page 280]{n}. 

The following theorem is proved for Fr\'{e}chet manifolds in~\cite{n} for smoothness in the sense of Keller. Careful  analysis of the proof of the theorem
shows that it has a topological nature and since $\mc$-differentiable maps are Keller's differentiable so the theorem is also valid for the subcategory of $\mc$-Fr\'{e}chet manifolds.
\begin{theorem}\cite[Theorem]{n} \label{imp}
	Let $ M $ be a regular smooth nuclear Fr\'{e}chet manifold. Then the map $ X \mapsto \mathcal{L}_X $
	is a linear topological isomorphism of the space $ \mathcal{V}(M) $ onto the space of continuous derivations in $ \mathcal{E}(M) $.  
\end{theorem}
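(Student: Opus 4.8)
The plan is to show that $X \mapsto \mathcal{L}_X$ is a continuous linear bijection onto the continuous derivations, with continuous inverse, treating well-definedness, continuity, injectivity, and surjectivity in turn and saving the reconstruction of the inverse for last. First I would verify that $\mathcal{L}_X$ really is a continuous derivation: linearity in $X$ is immediate, the Leibniz rule $\mathcal{L}_X(\varphi\psi)=\varphi\,\mathcal{L}_X\psi+\psi\,\mathcal{L}_X\varphi$ follows from the limit definition together with the flow property noted above, and continuity of $\mathcal{L}_X:\mathcal{E}(M)\to\mathcal{E}(M)$, as well as of the assignment $X\mapsto\mathcal{L}_X$, is read off from the defining chart topologies on $\mathcal{E}(M)$ and $\mathcal{V}(M)$: in a chart $\mathcal{L}_X\varphi$ equals $\dd\varphi(\cdot)(X(\cdot))$, a continuous bilinear expression in $(X,\varphi)$.

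For injectivity I would exploit the regularity hypothesis, which supplies smooth bump functions. Reading a continuous linear functional $\lambda\in F'$ in a chart $\psi_i$ and cutting it off by a bump $\chi$ that equals $1$ near a point $x$, one forms $\varphi=\chi\cdot(\lambda\circ\psi_i)$; near $x$ one has $\dd\varphi(x)(X(x))=\lambda\big(\dd\psi_i(x)X(x)\big)$. If $\mathcal{L}_X=0$ then this vanishes for all such $\lambda$, and since $F'$ separates the points of $F$ and $\dd\psi_i(x)$ is an isomorphism, $X(x)=0$ at each $x$; hence $X=0$.

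The substantive step will be surjectivity, i.e. reconstructing a vector field from an arbitrary continuous derivation $D$. First I would show $D$ is local: if $\varphi$ vanishes on an open set $V$ and $x\in V$, choosing $\chi$ supported in $V$ with $\chi\equiv 1$ near $x$ gives $\chi\varphi=0$, so $0=D(\chi\varphi)(x)=\chi(x)D\varphi(x)+\varphi(x)D\chi(x)=D\varphi(x)$; thus $D$ lowers supports and descends to each $\mathcal{E}(\psi_i(U_i))$. On a chart domain $W=\psi_i(U_i)\subset F$ I would apply the Hadamard expansion $\varphi(y)=\varphi(x)+\int_0^1 \dd\varphi\big(x+t(y-x)\big)(y-x)\,\dd t$: the constant term is annihilated by $D$ and the first-order remainder exhibits $(D\varphi)(x)$ as a continuous linear functional applied to $\dd\varphi(x)$, so there is a tangent vector $X(x)\in F$ with $(D\varphi)(x)=\dd\varphi(x)(X(x))$.

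The hard part will be the smoothness of $x\mapsto X(x)$, and this is precisely where nuclearity is indispensable. Here I would invoke the kernel-theorem identification $\mathcal{V}(W)\cong\mathcal{E}(W,F)\cong\mathcal{E}(W)\,\widehat{\otimes}\,F$, valid for nuclear $F$: continuity of $D$ together with this identification promotes the pointwise assignment $x\mapsto X(x)$ to an $MC^{\infty}$-section, so $X\in\mathcal{V}(U_i)$. Uniqueness from injectivity forces the locally built fields to agree on overlaps, and they glue to a global $X\in\mathcal{V}(M)$ with $\mathcal{L}_X=D$. Finally, since both the space $\mathcal{V}(M)$ and the derivation space carry the initial topologies with respect to the same chart restrictions, and the correspondence is the pointwise one above, the inverse $D\mapsto X$ is continuous as well, yielding the topological isomorphism. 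I expect the smoothness-of-the-section step to be the crux, as without nuclearity a continuous derivation need not come from any vector field. As a shortcut one may instead note that every $MC^{\infty}$-map is Keller's differentiable, so the statement reduces to the already-established Fr\'{e}chet case of~\cite{n}, whose argument is purely topological.
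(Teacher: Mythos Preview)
The paper does not actually prove this theorem: it is quoted from~\cite{n}, and the only argument the authors supply is the one-line observation preceding the statement, namely that the proof in~\cite{n} is topological in nature and that $MC^{\infty}$-maps are in particular Keller-smooth, so the result transfers to the $MC^{\infty}$ subcategory. Your final ``shortcut'' sentence is precisely this, and it is all the paper offers.

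The remainder of your proposal is therefore not a comparison target but an independent sketch of how Thomas's theorem itself is proved. As such it is broadly correct and follows the expected line: locality of derivations via bump functions (this is where regularity enters), pointwise reconstruction of the tangent vector through a first-order Taylor/Hadamard expansion, and then nuclearity to upgrade the pointwise assignment to a smooth section. The one place I would flag as genuinely delicate is the passage ``continuity of $D$ together with this identification promotes the pointwise assignment $x\mapsto X(x)$ to an $MC^{\infty}$-section'': the tensor identification $\mathcal{E}(W,F)\cong \mathcal{E}(W)\,\widehat{\otimes}\,F$ does not by itself convert a family of pointwise-defined vectors into a smooth section; one must argue that the continuous linear map $\varphi\mapsto D\varphi$ on $\mathcal{E}(W)$, being a derivation, factors through the first jet and hence is represented (via the nuclear kernel theorem applied to the appropriate bilinear pairing) by an element of $\mathcal{E}(W)\,\widehat{\otimes}\,F$. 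That is the actual content of the nuclearity step in~\cite{n}, and your sketch elides it. For the purposes of the present paper, however, your shortcut is exactly what is used, so the proposal is adequate.
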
 
In~\cite{k3} a covariant derivative for $\mc$-Fr\'{e}chet manifolds is defined by means of a connection map and Christoffel symbols.
However, that definition is not consistent with our context here as we need that a covariant derivative comes from a spray. 
Herein, we adapt the definition of a covariant derivative in the sense of Lang~\cite{l}.

If $\varphi \in \mathcal{E}(M)$ and 
$X \in \mathcal{V}(M)$, then we obtain an $MC^{\infty}$-function on $M$ via 
$$ X.\varphi := \dd \varphi \circ X : M  \to \rr. $$
For $X, Y \in \mathcal{V}(M)$, there exists a unique a vector
field $ [X,Y] \in \mathcal{V}(M)$ determined by the
property that on each open subset $U \subset M$ we have 
$$ [X,Y].\varphi = X.(Y.\varphi) - Y.(X.\varphi)  $$
for all $\varphi \in MC^\infty(U,\rr)$, see~\cite{k}.  
If we again denote the local representatives of $ X,Y $ in an open set $ U \subset F $ by themselves, then 
the local representation of  $[X,Y]$ is given by
\begin{equation*}
[X,Y](x) = X'(x)Y(x)-Y'(x)X(x).
\end{equation*}
By the definition we see that $ [X,Y] $ is bilinear in both arguments and $$ [X,Y]=-[Y,X],$$ and
$$
[X,[Y,Z]]=[[X,Y],Z]+[Y,[X,Z]].
$$
\begin{definition}\label{cov}
	Let $ \pi : TM \to M $ be the tangent bundle. A covariant derivative $ \nabla $
	is an $\rr$-bilinear map 
	\begin{equation*}
	\begin{array}{cccc}
	\nabla : \mathcal{V}(M) \times \mathcal{V}(M) \to \mathcal{V}(M) \\
	(X,Y) \to \nabla_XY
	\end{array}
	\end{equation*} 
	such that for all $ \varphi \in \mathcal{E}(M)$ and $ X,Y \in \mathcal{V}(M) $ the following hold 
	\begin{enumerate}
		\item $ \nabla_{\varphi X}Y = \varphi \nabla_{ X}Y$,
		\item $\nabla_{ X}(\varphi Y) = (\mathcal{L}_{X}\varphi)Y + \varphi \nabla_{ X}Y$,
		\item $\nabla_XY - \nabla_YX = [X,Y]$.
	\end{enumerate}
\end{definition}

In a chart $ U $ we index objects by $ U $ to show their representatives. Let $ \s $ be a spray on $ M $ 
and let $ \mycmd{S}_U(x) $ as in~\eqref{sr} be the symmetric function  associated with $ \s $ in $ U $. In a chart
$ U $, define
\begin{equation}\label{cdc}
(\nabla_XY)_U(x) = Y'_U(x)X_U(x)-\mycmd{S}_U(x)(X_U(x),Y_U(x)).
\end{equation}
It is a covariant derivative over $ U $ and it does not depend on the choice of a local chart, the proof is straightforward and similar to \cite[Theorem 2.1]{l}.

Now, we define a covariant derivative along a curve. Let $ I $ be an open interval in $ \rr $,  $ \lambda: I \to M$
a curve  and  $ \widehat{\lambda}: I \to TM$ its lift. Let $ \textrm{Lift} (\lambda) $ be the vector space of lifts of 
$ \lambda $. In a chart $ U $, define the operator 
\begin{equation}\label{geo}
\begin{array}{cccc}
\nabla_{\lambda'}: \textrm{Lift} (\lambda) \to \textrm{Lift} (\lambda) \\
\big (\nabla_{\lambda'}\gamma \big)_{U}(t) = \gamma'_U(t)-\mycmd{S}_U(\lambda(t)) \big (\lambda'_U(t),\gamma_U(t) \big ).
\end{array}
\end{equation}
This defines a covariant derivative
and it does not depend on the choice of a local chart and for a mapping $ \varphi $ it satisfies the derivation property
$$
\big (\nabla_{\lambda'}(\varphi \gamma) \big)(t) = \varphi'(t)(\nabla_{\lambda'}(\gamma))(t)+\varphi(t)(\nabla_{\lambda'}\gamma)(t),
$$
the proof is standard so we omit it, cf. ~\cite[Theorem 3.1]{l}.
Let $ X $ be a vector field such that $ \gamma(t)= X(\lambda(t)) $ for $ t \in I $ and
let $ Y $ be a vector field such that $ Y(\lambda(t_0)) = \lambda'(t_0) $ for some $ t_0 \in I $.
Then by the chain rule and~\eqref{geo} we have
\begin{equation*}
(\nabla_{\lambda'} \gamma)(t_0) = (\nabla_YX) \big (\lambda(t_0) \big).
\end{equation*} 
Let $ J $ be an open interval in $ \rr $,  $ \mu: J \to M$ a $ \mc$-curve $(k\geq 2)$, and  $ \gamma: J \to TM$ a lift of $\mu$. 
We say that $ \gamma $ is $ \mu $-parallel if $ \nabla_{\mu'}\gamma =0 $. By~\eqref{geo} in a local
chart we have $$\gamma'_U(t) = \mycmd{S}_U(\mu(t)) \big (\mu'_U(t),\gamma_U(t) \big ),$$
and hence $ \mu$ is a geodesic for the spray $ \s $ if and only if $ \nabla_{\mu'}{\mu'} =0$.

\section{Finsler structures and geodesics}
As mentioned on  a Fr\'{e}chet manifold there exist only weak Riemannian metrics with unsatisfactory properties.
Thus, we use a graded weak Riemannian structure or a Finsler structure instead. The idea  behind a graded weak Riemannian metric structure
is considering not one weak metric but a collection of weak metrics such that the family of induced seminorms generates the same topology
as the Fr\'{e}chet model space. Nevertheless, this is not enough to produce a strong enough topology on the tangent spaces, in addition, the induced seminorms need to satisfy an estimation of a tame type. 

In the finite dimensional theory of Finsler manifolds,  a Finsler structure is a function $F: TM \to \rr^{+}$ which is smooth 
on the complement of the zero section and positively homogeneous and strongly convex on each tangent space.  This definition is too restrictive and insufficient for infinite dimensional Fr\'{e}chet manifolds. 
By contrast, in the infinite dimensional theory there are two definitions of Finsler structures: one in the sense of Palais and another in the sense of Upmeier-Neeb which are   different by their local compatibility conditions. Roughly speaking a Finsler structure is a collection of continuous functions on the tangent
bundle  such that their restrictions
to every tangent space is a collection of seminorms that generates the same topology as the  Fr\'{e}chet model space. In addition, this family of seminorms
needs to satisfy a certain local compatibility condition. The infinite dimensional theory of Finsler manifolds is much less general than the finite dimensional theory and analogue notions and results may not be available.  

In this paper we use the definition of a Finslear structure  in the sense of Palais~\cite{pl}.
\begin{definition}\label{defni}\cite[Definition 4.2]{k3}
	Let $F$ be a Fr\'{e}chet space $T$  a topological space, and $V = T \times F$ the trivial bundle with fiber $F$ over $T$. A Finsler
	structure for $V$ is a collection of continuous functions $ \Finsler: V \rightarrow \mathbb{R}^+$, $n \in \nn$, such that 
	\begin{enumerate}
		\item For $b \in T$ fixed, $\parallel (b,f)\parallel^n = \parallel f \parallel_b^n$ is a collection of seminorms 
		on $F$ which gives the topology of $F$.
		\item Given $ k >1$ and $t_0 \in T$, there exists a neighborhood $\mathcal{U}$ of $t_0$  such that 
		\begin{equation} \label{ine}
		\dfrac{1}{k}\parallel f \parallel^n_{t_0} \,\leqq \,\parallel f \parallel^n_{u}\, \leqq k \parallel f \parallel^n_{t_0}
		\end{equation}
		for all $u \in \mathcal{U}$, $n \in \nn$, $f \in F$.
	\end{enumerate}
\end{definition}
Suppose $M$ is a bounded Fr\'{e}chet manifold modeled on $F$. 
Let $\pi_M : TM \rightarrow M$ be the tangent bundle and let $\Finsler: TM \rightarrow \mathbb{R}^+$ be a collection of functions, $n \in \nn$. We say 
$\{\Finsler\}_{n \in \nn} $ is a Finsler structure for 
$TM$ if for a given $x \in M$, there exists a bundle chart $\psi : U \times F \simeq TM\mid_U$ with $x \in U$  
such that
$$\{\Finsler \circ \, \psi^{-1}\}_{n \in \nn} $$
 is a Finsler structure for $U \times F$.

A bounded Fr\'{e}chet Finsler manifold is a bounded Fr\'{e}chet manifold together with a Finsler structure on its tangent bundle.
If $\{ \Finsler\}_{n \in \nn}$ is a Finsler structure for $M$, then eventually we can obtain a graded Finsler structure, $(\Finsler)_{n \in \nn}$, for $M$, that is $\parallel \cdot \parallel^{i} \leqq \parallel \cdot \parallel^{i+1}$ for all $i$.

We define the length of an $MC^1$-curve $\gamma : [a,b] \rightarrow M$ with respect to the $n$-th component by 
\begin{equation*}
L_n(\gamma) = \int_a^b \parallel \gamma'(t) \parallel_{\gamma(t)}^n\, dt.
\end{equation*}
The length of a  piecewise path  with respect to the $n$-th component is the sum over the curves constituting the path. So, a curve $\gamma$
possesses a sequence of geodesic lengths $L_n(\gamma)$. By abuse of language, we say that   
the length of a curve $\gamma$ is minimal  if for all other such curves $\lambda$, we have $L_n(\gamma) \leqq L_n(\lambda)$ for all $n$. On each connected component of $M$, the distance is defined by
\begin{equation*} 
\rho_n (x,y) = \inf_{\gamma} L_n(\gamma),
\end{equation*}
where infimum is taken over all continuous piecewise $MC^1$-curve connecting $x$ to $y$. Thus,
we obtain an increasing sequence of metrics $\rho_n(x,y)$ and define the distance $\rho$ by
\begin{equation}\label{finmetric}
\rho (x,y) = \displaystyle \sum_{n = 1}^{\infty} \dfrac{1}{2^n} \cdot \dfrac {\rho_n(x,y)}{ 1+ \rho_n(x,y)}.
\end{equation}
\begin{theorem}~\cite[Theorem 4.6]{k3}
	Suppose $M$ is connected and endowed with a Finsler structure $(\Finsler)_{n \in \nn}$. 
	Then the distance $\rho$ defined by~\eqref{finmetric} is a metric for $M$, called the Finsler metric. Furthermore, the topology induced by this metric coincides with the original topology of $M$. 
\end{theorem}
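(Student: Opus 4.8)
The plan is to verify the pseudometric axioms for $\rho$, then upgrade $\rho$ to a genuine metric and identify its topology by means of two local estimates extracted from the compatibility inequality~\eqref{ine}. Throughout I work in a fixed bundle chart $\psi : U\times F \simeq TM\mid_U$ around a base point $x_0$, normalized so that $\psi(x_0)=0$, and I exploit the graded structure $\parallel\cdot\parallel^i \leqq \parallel\cdot\parallel^{i+1}$, which guarantees that the sets $\{\,y : \parallel \psi(y)\parallel^{m}_{x_0} < \varepsilon\,\}$ form a neighborhood basis of $x_0$ in the manifold topology.

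First I would check that each $\rho_n$ is a pseudometric. Symmetry is immediate: reversing the parameter of a curve leaves its $L_n$-length unchanged because $\parallel\cdot\parallel^n$ is an even seminorm, so $\rho_n(x,y)=\rho_n(y,x)$. The triangle inequality follows by concatenating a curve from $x$ to $y$ with one from $y$ to $z$ and passing to the infimum, and the constant curve gives $\rho_n(x,x)=0$. To transfer these properties to $\rho$, I would use that $\phi(t)=t/(1+t)$ is increasing and subadditive, whence $\phi\circ\rho_n$ is again a pseudometric; since each term is bounded by $2^{-n}$, the series~\eqref{finmetric} converges and defines a pseudometric $\rho$.

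Next comes the upper estimate, which yields that the $\rho$-topology is coarser than the manifold topology. For $y$ close to $x_0$ in $U$, the straight segment $c(t)=t\,\psi(y)$ is an admissible path, and by~\eqref{ine} its $n$-th length is at most $k\parallel\psi(y)\parallel^n_{x_0}$, so $\rho_n(x_0,y)\leqq k\parallel\psi(y)\parallel^n_{x_0}$ and hence $\rho(x_0,y)\to 0$ as $y\to x_0$. Together with the triangle inequality this shows $\rho$ is continuous for the manifold topology, so that every $\rho$-ball is manifold-open.

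The main obstacle is the reverse inclusion, because the $\parallel\cdot\parallel^n$ are only seminorms and a competing path may wander outside $U$; I would handle this with a first-exit-time argument. Fix an index $m$ and $r_0>0$ such that $B:=\{\,y\in U: \parallel\psi(y)\parallel^m_{x_0}<r_0\,\}$ is contained in the neighborhood where~\eqref{ine} holds with constant $k$. For any $y$ and any path $\gamma$ from $x_0$ to $y$: if $\gamma$ stays in $B$, then writing $c=\psi\circ\gamma$ the seminorm triangle inequality and~\eqref{ine} give $\parallel\psi(y)\parallel^m_{x_0}\leqq \int \parallel c'(t)\parallel^m_{x_0}\,dt \leqq k\,L_m(\gamma)$; if $\gamma$ leaves $B$, cutting it at the first exit time forces $L_m(\gamma)\geqq r_0/k$. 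Taking the infimum over $\gamma$ yields
\begin{equation*}
\rho_m(x_0,y)\;\geqq\;\tfrac{1}{k}\min\bigl(\parallel\psi(y)\parallel^m_{x_0},\,r_0\bigr),\qquad\text{with}\qquad \rho_m(x_0,y)\geqq \tfrac{r_0}{k}\ \text{ whenever } y\notin B.
\end{equation*}
Consequently $\rho_m(x_0,y)<\tfrac{1}{k}\min(\varepsilon,r_0)$ forces $y\in B\subseteq U$ and $\parallel\psi(y)\parallel^m_{x_0}<\varepsilon$; since $\rho$ dominates $2^{-m}\phi(\rho_m)$, a sufficiently small $\rho$-ball lies inside the chosen basic neighborhood, which establishes the manifold topology $\subseteq$ the $\rho$-topology. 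The same estimate with $\rho(x_0,y)=0$ forces $\parallel\psi(y)\parallel^m_{x_0}=0$ for every $m$, and since the seminorms generate the Hausdorff topology of $F$ this gives $\psi(y)=0$, i.e. $y=x_0$; hence $\rho$ separates points and is a genuine metric. Combining the two inclusions completes the identification of the topologies.
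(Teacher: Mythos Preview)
The paper does not supply its own proof of this theorem: it is quoted verbatim as \cite[Theorem~4.6]{k3} and no argument follows. So there is nothing in the present paper to compare your attempt against line by line.

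That said, your proof is correct and is precisely the argument one expects for a Palais-type Finsler metric, carried over to the graded Fr\'echet setting. The three ingredients---pseudometric axioms via subadditivity of $t\mapsto t/(1+t)$, the upper bound from straight segments in a chart, and the lower bound from a first-exit-time estimate---are the standard ones, and you have adapted them properly: you use the graded condition $\parallel\cdot\parallel^{i}\leqq\parallel\cdot\parallel^{i+1}$ to ensure that single seminorm balls form a neighborhood basis, and you correctly observe that the compatibility neighborhood in Definition~\ref{defni}(2) works simultaneously for all $n$, so fixing one index $m$ suffices for the exit-time bound. The positivity of $\rho$ then drops out of the lower estimate and the Hausdorffness of $F$, exactly as you wrote. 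This is essentially how the result is proved in the cited reference as well.
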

If a manifold admits a partition of unity, then it possesses a Finsler structure, in particular, nuclear Fr\'{e}chet manifolds can be equipped with Finsler
structures, cf.~\cite[Proposition 4.4]{k3}.
\begin{definition}
	Let $ F $ be a Fr\'{e}chet space. A continuous function $ \mid \cdot \mid : F \to \rr^{+} $ is said to be the
	pre-Finsler norm on $ F $ if 
	\begin{enumerate}
		\item it is positive homogeneous of order 1, 
		\item it is sub-additive.
	\end{enumerate}
\end{definition}
\begin{definition}\label{f1}
	Let $ (F, \mid \cdot \mid) $ be a pre-Finsler space, a function $ \ll \cdot,\cdot \gg : F \times F \to \rr$
	is said to be the Finslerian  product if 
	\begin{enumerate}
		\item it is positive homogeneous of order 1 in its first argument, 
		\item it is linear in its second variable.
	\end{enumerate}  
\end{definition}
We say that a vector $ v \in F$ is F-orthogonal to $ u \in F $ if 
$ \ll u,v\gg_n = 0, \forall n \in \nn $.

Let $ M $ be a nuclear Fr\'{e}chet manifold of class $MC^{\infty}$ with a Finsler structure $(\Finsler)_{n \in \nn}  $.
Let $ x \in M $ and $ u, v \in T_xM $. The tangent space $ T_xM $  admits  semi-inner products by Hilbertian seminorms 
$\sn v \sn^n_x = \sqrt{\langle v,v\rangle_{n,x}}$. 
We define the Finslerin products on $ T_xM $ simply by 
\begin{equation}\label{semri}
\ll u,v \gg_{n,x} = \langle u, v \rangle_{n,x}, \forall n \in \nn.
\end{equation}
For the sake of brevity we write $ \ll u,v \gg_n $ instead of $ \ll u,v \gg_{n,x}$ where the confusion may not occur.

In local charts, mappings $\ll \cdot, \cdot \gg_n $ are linear so smooth in the sense of
Keller. Also, in local charts, the Cauchy-Schwartz inequality yields that they are globally Lipschitz and so of class $MC^{\infty}$ by Lemma B.1(a)~\cite{hg}.
\begin{remark}
	For nuclear F\'{e}chet manifolds a Finsler structure $(\Finsler)_{n \in \nn}  $ in fact is given by semi-inner products and the products~\eqref{semri} are Riemannian. Therefore, on each tangent
	space the topology is induced by a family of weak Riemannian metrics that satisfy the Finsler condition. In such a case, we call $(\Finsler)_{n \in \nn}  $ a Riemann-Finsler structure. It is to be observed that we cannot use an arbitrary collection of weak metrics they need to satisfy
	the Finsler condition (Definition~\eqref{defni}); this justifies the terminology  ``Riemann-Finsler structure".  
\end{remark}
If $ X,Y $ are vector fields, then $ \ll X,Y \gg_n $ is a function on $ M $
with the value $ \ll X(x),Y(x) \gg_n $ at a point $ x \in M $.
\begin{proposition}\label{key}
	Let $ M $ be an $MC^{\infty}$-nuclear  Fr\'{e}chet manifold with a Riemann-Finsler structure $ ( \Finsler )_{n \in \nn} $.
	Then for each $ n \in \nn $ there exists a unique covariant derivative $\nabla^n$ such that
	\begin{equation}\label{fc}
	\nabla^n_Z \ll X,Y \gg_n = \ll \nabla^n_Z X, Y\gg_n + \ll X,\nabla^n_Z Y\gg_n; \,  X,Y,Z \in \mathcal{V}(M).
	\end{equation} 
\end{proposition}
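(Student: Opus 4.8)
The plan is to prove Proposition~\ref{key} by the infinite-dimensional analogue of the fundamental theorem of Riemannian geometry, replacing the usual non-degeneracy argument by the nuclear identification of Theorem~\ref{imp}. Since every covariant derivative in the sense of Definition~\ref{cov} is torsion-free (axiom~(3)) and obeys the Leibniz rules (axioms~(1)--(2)), the compatibility condition~\eqref{fc} together with torsion-freeness forces, by the standard cyclic permutation of the three letters, the Koszul identity
\[ 2\ll \nabla^n_X Y, Z\gg_n = X.\ll Y,Z\gg_n + Y.\ll X,Z\gg_n - Z.\ll X,Y\gg_n + \ll [X,Y],Z\gg_n - \ll [X,Z],Y\gg_n - \ll [Y,Z],X\gg_n. \]
First I would read this identity as the definition of a candidate operator: for fixed $X,Y\in\mathcal V(M)$ the right-hand side, call it $2\,\omega^n_{X,Y}(Z)$, is $\rr$-linear and, using the Leibniz property of the bracket and the derivation property of $Z.\ll\cdot,\cdot\gg_n$, in fact $\mathcal E(M)$-linear (tensorial) in $Z$. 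Thus $\omega^n_{X,Y}$ is a well-defined continuous fibrewise-linear functional on $TM$.

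The crux is to produce from $\omega^n_{X,Y}$ a genuine $MC^\infty$-vector field $\nabla^n_X Y$ with $\ll \nabla^n_X Y, Z\gg_n = \omega^n_{X,Y}(Z)$. This is exactly where the weakness of the structure bites: because $\langle\cdot,\cdot\rangle_{n,x}$ is only a Hilbertian seminorm, it cannot be inverted on $T_xM$ as in the Banach case. I would work in a chart, where the metric becomes an $MC^\infty$-family $x\mapsto \langle\cdot,\cdot\rangle_{n,x}$ of semi-inner products on $F$. Completing $F$ in $\|\cdot\|^n$ to the Hilbert space $F_n$, the functional $\omega^n_{X,Y}(x)$ is $\|\cdot\|^n$-continuous, so Riesz representation yields a representative in $F_n$; nuclearity then rescues the construction, for there is a finer seminorm $\|\cdot\|^m$ with $F_m\to F_n$ nuclear, and this embedding returns the representative to $F$ and makes it depend $MC^\infty$-smoothly on $x$. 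The resulting symmetric bilinear map is precisely a Christoffel symbol $\mycmd{S}^n_U$ as in~\eqref{sr}, and Theorem~\ref{imp} guarantees that the associated assignment is realized by continuous derivations, hence by honest elements of $\mathcal V(M)$. Feeding $\mycmd{S}^n_U$ into the local formula~\eqref{cdc} defines $\nabla^n$; checking axioms~(1)--(3) of Definition~\ref{cov} and the compatibility~\eqref{fc} is then routine from the defining formula, and chart-independence glues the local pieces into a global covariant derivative.

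Uniqueness I would obtain directly from the Koszul identity: if $\nabla^n$ and $\widetilde\nabla^n$ both satisfy~\eqref{fc} and are torsion-free, their difference $S(X,Y)=\nabla^n_XY-\widetilde\nabla^n_XY$ is symmetric and satisfies $\ll S(X,Y),Z\gg_n=0$ for all $Z$, so the Riesz/nuclear representation above---which selects the unique representative orthogonal to $\ker\|\cdot\|^n$ inside $F$---forces $S=0$. I expect the main obstacle to be precisely this index-raising step: verifying that the Riesz representative of $\omega^n_{X,Y}$ lies in $F$ and not merely in the completion $F_n$, and that it varies $MC^\infty$-smoothly in the base point. This is where nuclearity and the derivation identification of Theorem~\ref{imp}, rather than any classical Riemannian argument, do the essential work.
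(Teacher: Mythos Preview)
Your Koszul-formula skeleton matches the paper's, but your index-raising step has a genuine gap. You propose to take the Riesz representative of $\omega^n_{X,Y}(x)$ in the Hilbert completion $F_n$ and then invoke nuclearity of some $F_m\to F_n$ to ``return the representative to $F$.'' This does not work: the nuclear map $F_m\to F_n$ goes from the finer completion to the coarser one, and nuclearity is a smallness condition on that embedding, not an invertibility or lifting property. There is no reason the Riesz vector in $F_n$ lies in the (dense) image of $F$, and nothing in the definition of a nuclear operator produces such a preimage, let alone one varying $MC^\infty$-smoothly in $x$. The obstacle you correctly flag at the end is therefore not resolved by your proposed cure.

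The paper avoids Riesz representation altogether. With $X,Y$ fixed, it appeals directly to Theorem~\ref{imp}---the identification of $\mathcal{V}(M)$ with the continuous derivations of $\mathcal{E}(M)$---to produce a vector field $\nabla^n_XY$ satisfying $\ll \nabla^n_XY,Z\gg_n=\tfrac12 K_n(X,Y,Z)$; the derivation viewpoint is the entire existence mechanism, not a confirmation layered on after a chart-level construction. For uniqueness the paper is likewise more direct: from $\ll \widehat\nabla^n_XY-\nabla^n_XY,Z\gg_n=0$ for all $Z$ it simply invokes Hausdorffness of the model space, without any selection of a representative orthogonal to $\ker\|\cdot\|^n$.
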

\begin{proof}
	(Uniqueness). Suppose there exists such a covariant derivative. If for all $ X,Y $ and $ Z $ 
	we compute $ \nabla^n_Z \ll X,Y\gg_n $, $ \nabla^n_{ X} \ll Y,Z \gg_n $ and $ \nabla^n_Y \ll Z,X \gg_n$ by~\eqref{fc},
	then by subtracting the sum of the first two from the last one and applying the torsion-free property of a covariant derivative
	we obtain
	\begin{align} \label{hard}
	K_n(X,Y,Z)=& \mathcal{L}_{Z} \ll X,Y \gg_n + \mathcal{L}_X \ll Y,Z \gg_n - \mathcal{L}_Y \ll Z,X \gg_n  \nonumber \\ 
	-& \ll X, [Y,Z] \gg_n + \ll Y,[Z,X] \gg_n + \ll Z,[X,Y] \gg_n \nonumber \\ 
	=&2\ll \nabla^n_{ X}Y,Z \gg_n. 
	\end{align}
	Let $ \widehat{\nabla}^n $ be the other covariant derivatives satisfying \eqref{fc}.  The right-hand side of~\eqref{hard} does not
	depend on the covariant derivatives, therefore, for all $ n \in \nn $ we have $$ \ll \widehat{\nabla}^n_{ X}Y-\nabla^n_{ X}Y,Z \gg_n =0 .$$ Since $ Z $ is arbitrary,  the  Hausdorffness implies that $$ \widehat{\nabla}^n_{ X}Y = \nabla^n_{ X}Y .$$ 
	
	(Existence). Fix $ X,Y $, the function $ K_n(X,Y,Z) $ is smooth since it is the sum of smooth functions. The mapping $ K_n(X,Y,Z) \mapsto \mathcal{L}_ZK_n(X,Y,Z)$ is a continuous derivation so by Theorem~\ref{imp} for each $ n $
	there is a uniquely defined vector field which we call $\nabla^n_XY  $ such that $$ \ll \nabla^n_{ X}Y,Z \gg_n = \dfrac{1}{2} K_n(X,Y,Z) .$$
	Showing that $ \nabla^n_XY $ satisfies the properties $ (1)-(3)$ in Definition~\ref{cov} is standard.
	Therefore, it is omitted.  
\end{proof}
The preceding theorem and the ones we shall prove strongly depend on the nuclearness property of manifolds and the $\mc$-differentiability. They are not true for Fr\'{e}chet manifolds even for Banach manifolds with weak Riemannian metrics in general.

Henceforth, we assume that $M$ is a connected nuclear Fr\'{e}chet manifold of class $MC^{\infty}$ with a Riemann-Finsler structure 
$ ( \Finsler )_{n \in \nn} $.
Let $ x \in M $ and let $ B(0_x,r) $ be the open ball in $ T_xM $ centered at $ 0_x $ with  radius $ r $ with respect to the Finsler metric $\rho$~\eqref{finmetric}.
The injectivity radius of $ M $ at $ x $, $ i(x) $, is the least upper bound of  numbers $ r>0 $, such that
$ \exp_x $ is a diffeomorphism on  $ B(0_x,r) $.
\begin{theorem}\label{23}
	Let $ x \in M$, and let $ \varepsilon > 0 $ be such that $ \mathcal{U} = \exp_x (B (0_x, \varepsilon)) $ is a normal neighborhood 
	of $ x $. Then for any $ y \in \mathcal{U} $ there exists a unique geodesic $ \ell : [0,1] \to M $ joining $ x  $ and $ y $ such that
	for all $ n \in \nn $
	\begin{equation*}
	L_n(\ell) \leqq  \varepsilon. 
	\end{equation*}
\end{theorem}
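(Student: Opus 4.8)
The plan is to transport the classical Lang-style normal-neighborhood argument into the present setting, using the exponential diffeomorphism for existence and uniqueness and the constant-speed property coming from the Riemann-Finsler covariant derivatives of Proposition~\ref{key} for the length bound.

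First I would produce the geodesic. Since $\mathcal{U} = \exp_x(B(0_x,\varepsilon))$ is a normal neighborhood, Proposition~\ref{expo} together with Theorem~\ref{inv} guarantees that $\exp_x : B(0_x,\varepsilon) \to \mathcal{U}$ is a diffeomorphism. Given $y \in \mathcal{U}$, I set $v := \exp_x^{-1}(y) \in B(0_x,\varepsilon)$ and define $\ell : [0,1] \to M$ by $\ell(t) = \exp_x(tv)$. Because $B(0_x,\varepsilon)$ is star-shaped, $tv$ remains in the ball for $t \in [0,1]$, so $\ell$ is well defined with image in $\mathcal{U}$; by Proposition~\ref{geod} it is a geodesic, and $\ell(0) = \exp_x(0_x) = x$, $\ell(1) = \exp_x(v) = y$, so $\ell$ joins $x$ to $y$. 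For uniqueness I would take any geodesic $\gamma : [0,1] \to M$ joining $x$ to $y$ and contained in $\mathcal{U}$ (equivalently, with $\gamma'(0) \in B(0_x,\varepsilon)$); by the converse part of Proposition~\ref{geod}, $\gamma(t) = \exp_x(tw)$ with $w = \gamma'(0) \in B(0_x,\varepsilon)$, and then $\exp_x(w) = \gamma(1) = y = \exp_x(v)$ together with the injectivity of $\exp_x$ on $B(0_x,\varepsilon)$ forces $w = v$, hence $\gamma = \ell$.

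Then comes the length estimate. The decisive point is that $\ell$ has constant $n$-speed: applying the curve version of the compatibility relation~\eqref{fc} for $\nabla^n$ together with the geodesic equation $\nabla^n_{\ell'}\ell' = 0$ gives $\tfrac{\dd}{\dd t}\ll \ell'(t),\ell'(t)\gg_n = 2\ll \nabla^n_{\ell'}\ell',\, \ell'\gg_n = 0$, so that $\parallel \ell'(t)\parallel^n_{\ell(t)} \equiv \parallel \ell'(0)\parallel^n_x = \parallel v\parallel^n_x$. Consequently $L_n(\ell) = \int_0^1 \parallel \ell'(t)\parallel^n_{\ell(t)}\,\dd t = \parallel v\parallel^n_x$. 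Finally, reading $T_xM$ as a flat Fr\'{e}chet manifold carrying the constant seminorms $\parallel \cdot\parallel^n_x$, the radial segment $t \mapsto tv$ realizes $\rho_n(0_x,v) = \parallel v\parallel^n_x$, so that $v \in B(0_x,\varepsilon)$, i.e. $\rho(0_x,v) < \varepsilon$, controls each $\parallel v\parallel^n_x$ by $\varepsilon$ and yields $L_n(\ell) \leqq \varepsilon$ for every $n \in \nn$.

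The existence and uniqueness steps are routine once Proposition~\ref{geod} and the diffeomorphism property are in hand; the hard part will be the length step. The subtlety is that geodesics here are defined through the spray $\s$, whereas the constant-speed computation needs $\ell$ to be parallel for each metric covariant derivative $\nabla^n$ simultaneously, so I must invoke the compatibility of $\s$ with the whole Riemann-Finsler family rather than with a single metric. After that, the remaining care lies in relating the $n$-th seminorm length $\parallel v\parallel^n_x$ to the radius $\varepsilon$ of the Finsler ball, the step most sensitive to the precise normalization in~\eqref{finmetric}.
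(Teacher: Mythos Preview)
Your argument tracks the paper's closely: existence and uniqueness via the diffeomorphism $\exp_x|_{B(0_x,\varepsilon)}$ together with Proposition~\ref{geod} are essentially the paper's own steps (its uniqueness paragraph is a longer version of your direct appeal to injectivity of $\exp_x$ on the ball).

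Where you diverge is the length bound, and here you are in fact more careful than the paper. The paper simply asserts ``since $\ell$ is contained in $\mathcal{U}$ then for all $n$ we have $\sn \ell'(t)\sn^n_{\ell(t)}\leq\varepsilon$,'' with no further explanation. Your detour through constant $n$-speed ($\nabla^n_{\ell'}\ell'=0$ plus compatibility~\eqref{fc}) reduces the claim transparently to $\sn v\sn^n_x\leq\varepsilon$. You are right to flag the final step as the delicate one, and it is a genuine gap that the paper shares: since $B(0_x,\varepsilon)$ is declared to be the ball for the combined metric $\rho$ of~\eqref{finmetric}, the condition $\rho(0_x,v)<\varepsilon$ yields only $\tfrac{1}{2^n}\tfrac{\sn v\sn^n_x}{1+\sn v\sn^n_x}<\varepsilon$, hence at best $\sn v\sn^n_x<\tfrac{2^n\varepsilon}{1-2^n\varepsilon}$, not $\sn v\sn^n_x\leq\varepsilon$. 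Neither argument closes this without reinterpreting $B(0_x,\varepsilon)$ as the intersection $\bigcap_n\{\sn v\sn^n_x<\varepsilon\}$. Your second caveat is equally well placed: the constant-speed step presumes that the spray $\s$ defining geodesics coincides with the spray of each $\nabla^n$, a compatibility the paper uses throughout (e.g.\ in Theorem~\ref{gl} and Theorem~\ref{elq}) but never isolates as an explicit hypothesis.
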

\begin{proof}
	Let $ x \in M $ and let $ 0_x \in T_xM $ be the zero vector. On an open neighborhood $ N $ of $ 0_x $ in $ T_xM $ define the mapping 
	$ \varphi(v) = (x, \exp_{x}(v) )$. By virtue of Proposition~\ref{expo} in local charts, the Jacobin matrix of $ \varphi $
	at $ 0_x $ is
	$$ \varphi_{*}=
	\begin{bmatrix}
	id & 0 \\
	*  & id
	\end{bmatrix},
	$$
	which is invertible. Thus, by the inverse function theorem~\eqref{inv} $ \varphi $ is a diffeomorphism from some
	neighborhood $ W $ of $ 0_x $ onto its image. We can shrink $ W $ and assume that 
	$ W = \bigcup _{p \in V}B(0_p,\varepsilon) $ for some open neighborhood 
	$ V $ of $ x $. Then, for $ y \in \mathcal{U} $ there exists a unique $ v \in W $
	such that $ \varphi(v) = (x,y) $. That is, there exists a unique $ v \in B(0_x,\varepsilon) $
	such that $ \exp_x v = y$. Now define $ \ell(t): [0,1] \to M $ by $ \ell(t) = \exp_{x}(tv) $, this is a geodesic connecting $ x $ to $ y $
	and $ \ell'(0) =v $ and entirely is contained in $ \mathcal{U} $, since $B(0_x,\varepsilon)  $ is star-shaped and so 
	$ tv \in B(0_x,\varepsilon)  $ for $ t \in [0,1] $. Since $ \ell $ is contained in $ \mathcal{U} $ then for all $ n \in \nn$ we have $$ \sn \ell'(t) \sn_{\ell(t)}^n \leqq \varepsilon, $$ and so $ L_n (\ell) \leq \varepsilon $. 
	
	To prove the uniqueness let $ \alpha $ be another geodesic in $ \mathcal{U} $ connecting $ x,y $. We may assume that  $ \alpha(0)=1 $
	and $ \alpha(1)=y $ after an appropriate reparameterization. Then by Proposition~\ref{geod} we have
	$ \alpha(t) = \exp (t \alpha'(0)) $ for all $ t \in [0,1] $. Let $$ I =  \exp_x ^{-1}(\Img(\alpha)) .$$
	It is a line segment contained in $ B(0_x,\varepsilon) $ and its endpoints are $ 0_x $ and $ a \alpha'(0) $  for some $ a>0 $,
	because $ \Img (\alpha) \subset \mathcal{U} $ and the map $ \exp_x $ is a diffeomorphism so $ I $ is a connect closed
	subset in $$ \mathbb{A} = \big\{t\alpha'(0) \in T_xM \mid t \in (0,\infty)\big\}. $$ Now, we show that $ a\geq 1 $.
	If $ a < 1 $, then, the openness of $\mathcal{U}  $ yields there exists $  b \in (0,1] $ such that $ b \alpha'(0) \in \mathcal{U} $.
	But $$ \exp_x (b \alpha'(0)) \notin \Img (\alpha), $$ since $ \exp_x $
	is bijective on $ \mathbb{A} \bigcap \mathcal{U} $ and $ \exp_x (I) = \Img (\alpha) $.
	This is a contradiction because the image of the line segment connecting $ 0_x $
	and $ \alpha'(0) $ under $ \exp_x $ is $ \Img (\alpha) $.
	Thus, $ \alpha \geq 1 $  and so $ \alpha'(0) \in \mathcal{U} $. Therefore, $ \exp_x(\alpha'(0)) = \alpha(1) = y $
	and $ \alpha'(0) = \exp_x^{-1}(y) = v $, whence $ \alpha = \ell $.

\end{proof}   
Let $ I_1,I_2 $ be open intervals in $ \rr $ and let $ \ell: I_1 \times I_2 \to M,\, (t,s) \mapsto \ell(t,s)$ be an $MC^{\infty}$-curve.
Let $ \partial_i \ell $, $i=1,2 $, denote the ordinary partial derivative with respect to the $ i $-th variable. 
Since the curves $ t \mapsto \partial_i \ell $ and $ s \mapsto \partial_i \ell $ are lifts in $TM $ 
we can consider their covariant derivatives. 

For each $ n \in \nn $, let  $ \nabla^n_1 \partial_2 \ell$ be the covariant derivative of $\partial_2 \ell$ along
the curve $ \ell_s(t) = \ell(t,s) $ for a fixed $ s $. Similarly, let $ \nabla^n_2 \partial_1 \ell$ be the covariant derivative of $\partial_1 \ell$ along the curve $ \ell_t(s) = \ell(t,s) $ for each fixed $ t $. By Formula~\eqref{cdc} in a local chart $ U$
\begin{equation*}
\nabla^n_1 \partial_2 \ell_U = \partial_1\partial_2 \ell_U - \mycmd{S}_U(\ell_U)(\partial_1 \ell_U, \partial_2 \ell_U), 
\end{equation*}
and symmetry of $ \mycmd{S}_U $ implies that 
\begin{equation}\label{231}
\nabla^n_1 \partial_2 \ell = \nabla^n_2 \partial_1 \ell.
\end{equation}
therefore, for all $ n \in \nn $
\begin{equation}
\partial_2 \ll \partial_1 \ell, \partial_1 \ell  \gg_n = 2 \ll \nabla^n_2 \partial_1 \ell,\partial_1 \ell  \gg_n,
\end{equation}
so \eqref{231} follows that 
\begin{equation}\label{38}
\partial_2 \ll \partial_1 \ell, \partial_1 \ell  \gg_n = 2 \ll \nabla^n_1 \partial_2 \ell,\partial_1 \ell  \gg_n.
\end{equation}
Let $ \varepsilon>0 $ and $ x \in M $. Define a set $ S_{x,\varepsilon} \coloneq \{ v \in T_xM \mid \ll v, v \gg_n = \varepsilon^2 (\forall n \in \nn) \} $.

The following result generalizes the classical Gauss's lemma to the context of infinite dimensional $MC^{\infty}$-nuclear Fr\'{e}chet manifolds equipped with  Riemann-Finsler  structures. 
\begin{theorem}[Gauss's lemma]\label{gl}
	Let $ x_0 \in M $ and let $ (\mathcal{U}, \mathcal{W}) $ be a normal neighborhood of $ x_0 $.
	Then the geodesics through $ x \in \mathcal{U} $ are F-orthogonal to the image of $ S_{x,\varepsilon} $ under $ \exp_x $,
	for small enough $ \varepsilon > 0 $.
\end{theorem}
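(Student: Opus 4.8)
The plan is to deduce the statement from the sharper pointwise identity
\[
\ll \dd(\exp_x)_v(w),\,\dd(\exp_x)_v(v)\gg_n \;=\; \ll v,w\gg_n ,
\]
valid for every $n\in\nn$, every $v$ at which $\exp_x$ is a local diffeomorphism, and every $w\in T_xM$ (identifying $T_v(T_xM)\cong T_xM$). Granting this, fix a point $x$ and choose $\varepsilon>0$ so small that $S_{x,\varepsilon}\subset\mathcal{W}$ and $\exp_x$ restricts to a diffeomorphism there; this is possible by Proposition~\ref{expo} and the definition of a normal neighbourhood. For $v\in S_{x,\varepsilon}$, a tangent vector to $\exp_x(S_{x,\varepsilon})$ at $\exp_x(v)$ has the form $\dd(\exp_x)_v(w)$ with $w$ tangent to $S_{x,\varepsilon}$ at $v$; differentiating the defining relation $\ll v(s),v(s)\gg_n=\varepsilon^2$ of any curve $s\mapsto v(s)\in S_{x,\varepsilon}$ with $v(0)=v$, $v'(0)=w$ gives $\ll v,w\gg_n=0$ for all $n$, i.e. $w$ is F-orthogonal to $v$. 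The radial geodesic from $x$ through $\exp_x(v)$ is $\alpha_v(t)=\exp_x(tv)$ by Proposition~\ref{geod}, with velocity $\alpha_v'(1)=\dd(\exp_x)_v(v)$ at $\exp_x(v)$. Hence the identity forces $\ll \dd(\exp_x)_v(w),\dd(\exp_x)_v(v)\gg_n=\ll v,w\gg_n=0$, which is exactly the asserted F-orthogonality.

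To prove the identity I would introduce the two-parameter geodesic variation
\[
\ell(t,s)=\exp_x\!\big(t(v+sw)\big),\qquad (t,s)\in[0,1]\times(-\delta,\delta).
\]
Since $\exp_x(tv)$ is defined for $t\in[0,1]$ and the domain of the spray flow is open in $TM$ by Lemma~\ref{3}, the vectors $t(v+sw)$ stay in $\mathcal{W}$ for $|s|$ small, so $\ell$ is well defined and of class $MC^\infty$ as a composite of the $MC^\infty$-maps $(t,s)\mapsto t(v+sw)$ and $\exp_x$. For each fixed $s$ the curve $t\mapsto\ell(t,s)$ is a geodesic with initial velocity $v+sw$, so $\nabla^n_1\partial_1\ell=0$ and its speed is constant in $t$, namely $\ll\partial_1\ell,\partial_1\ell\gg_n=\ll v+sw,v+sw\gg_n$; moreover $\ell(0,s)=x$ for all $s$, whence $\partial_2\ell(0,s)=0$.

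The core is then a short computation. Put $g_n(t)=\ll\partial_2\ell,\partial_1\ell\gg_n$ evaluated along $s=0$. Differentiating in $t$ and using the compatibility~\eqref{fc} of $\nabla^n$ in the first variable, the geodesic relation $\nabla^n_1\partial_1\ell=0$, and the symmetry $\nabla^n_1\partial_2\ell=\nabla^n_2\partial_1\ell$ from~\eqref{231},
\[
g_n'(t)=\ll\nabla^n_1\partial_2\ell,\partial_1\ell\gg_n=\ll\nabla^n_2\partial_1\ell,\partial_1\ell\gg_n=\tfrac12\,\partial_2\ll\partial_1\ell,\partial_1\ell\gg_n ,
\]
exactly the manipulation recorded in~\eqref{38}. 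Evaluating the last term at $s=0$ gives $\tfrac12\,\partial_s\ll v+sw,v+sw\gg_n\big|_{s=0}=\ll v,w\gg_n$, so $g_n'(t)\equiv\ll v,w\gg_n$ is constant in $t$. Because $\partial_2\ell(0,0)=0$ we have $g_n(0)=0$, and integration in $t$ yields $g_n(1)=\ll v,w\gg_n$, which is the desired identity since $\partial_2\ell(\cdot,0)|_{t=1}=\dd(\exp_x)_v(w)$ and $\partial_1\ell(\cdot,0)|_{t=1}=\dd(\exp_x)_v(v)$.

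I expect the main obstacle to be analytic rather than algebraic: one must justify each differentiation in the $MC^\infty$-Fréchet setting, namely that $\ell$ is genuinely $MC^\infty$ and remains in the chart domain for small $|s|$ (openness of the flow domain, Lemma~\ref{3}), that the products $\ll\cdot,\cdot\gg_n$ may be differentiated along the variation (they are $MC^\infty$ on $TM$, as noted after~\eqref{semri}), and—crucially—that they are genuinely Riemannian, so that~\eqref{fc}, the symmetry~\eqref{231}, and the constant-speed identity are legitimate. This is precisely where nuclearness enters, since it supplies the Hilbertian seminorms and, via Proposition~\ref{key}, the compatible covariant derivatives $\nabla^n$. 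The only specifically infinite-dimensional caution is that the conclusion must hold for every $n\in\nn$ simultaneously; but as the argument is carried out index by index this presents no difficulty.
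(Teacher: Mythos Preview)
Your proposal is correct and follows essentially the same route as the paper: introduce a two-parameter family of radial geodesics, exploit the geodesic relation $\nabla^n_1\partial_1=0$, the symmetry $\nabla^n_1\partial_2=\nabla^n_2\partial_1$ from~\eqref{231}, and the compatibility~\eqref{fc} to control $\ll\partial_1,\partial_2\gg_n$, then use the vanishing of $\partial_2$ at the origin as the initial condition.

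The only difference is organizational. The paper uses a ``polar'' variation $\imath(s,t)=\exp_x\!\big(s\,\jmath(t)\big)$ with $\jmath$ a curve in $S_{x,1}$; since $\ll\partial_1\imath,\partial_1\imath\gg_n\equiv 1$, the term $\tfrac12\partial_2\ll\partial_1,\partial_1\gg_n$ vanishes outright, so $\ll\partial_1\imath,\partial_2\imath\gg_n$ is constant in the radial parameter and equals its value $0$ at the origin. You instead use a ``linear'' variation $\ell(t,s)=\exp_x\!\big(t(v+sw)\big)$, for which $\ll\partial_1\ell,\partial_1\ell\gg_n=\ll v+sw,v+sw\gg_n$ is not constant in $s$; the same computation then yields $g_n'(t)=\ll v,w\gg_n$, and integrating gives the sharper pointwise identity $\ll\dd(\exp_x)_v(w),\dd(\exp_x)_v(v)\gg_n=\ll v,w\gg_n$, from which F-orthogonality follows as a special case. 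Your version thus proves a bit more and avoids having to parametrise curves in $S_{x,1}$, at the cost of one extra integration; the analytic justifications you flag (openness of the flow domain via Lemma~\ref{3}, $MC^\infty$-smoothness of $\ll\cdot,\cdot\gg_n$, and the role of nuclearity through Proposition~\ref{key}) are exactly the ones the paper relies on as well.
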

\begin{proof}
	For $ \varepsilon > 0 $ small enough, the map  $ \exp_x $ is defined on an open ball in $ T_xM $ of  radius slightly larger than $ \varepsilon $.
	The proof is equivalent to prove that for any $MC^{\infty}$-curve $ \jmath : I \to S_{x,1} $,  and  $ 0 < s < \varepsilon $, if we define
	\begin{equation*}
	\imath(s,t) = \exp (s \jmath(t))
	\end{equation*}
	then for any arbitrary $s_0,t_0$  the following curves
	\begin{equation*}
	t \to \exp_x(s_0\jmath(t)), \quad s \to \exp_x(s \jmath (t_0))
	\end{equation*}
	are F-orthogonal. 
	By proposition~\ref{geod} for each $ t $, the map $ \imath_t : s \to \imath(s,t) $ is a geodesic so for all $n \in \nn$
	$$ \nabla_1^n \partial_1 \imath = 0, $$ and $$ \partial_1 \ll \partial_1 \imath, \partial_1 \imath  \gg_n = 2 \ll \nabla^n_1 \partial_1 \imath, \partial_1 \imath  \gg_n =0,\, \forall n \in \nn.$$
	Thus, the functions 
	\begin{equation}\label{fin}
	s \mapsto \ll \partial_1 \imath(s,t), \partial_1 \imath(s,t)\gg_n
	\end{equation}
	    are constant for each $ t $.
	Since $ \partial_1 \imath(0,t) = \jmath (t) $  and $ \ll \jmath(t),\jmath(t) \gg_n=1 (\forall n \in \nn) $ it follows that
	$$ \ll \partial_1 \imath, \partial_1 \imath \gg_n=1 (\forall n \in \nn). $$ Therefore, 
	by~\eqref{38}
	\begin{equation*}
	\partial_1\ll \partial_1 \imath, \partial_2 \imath \gg_n = \ll \nabla_1 \partial_1 \imath, \partial_2 \imath  \gg_n 
	+ \dfrac{1}{2} \partial_2 \ll \partial_1 \imath, \partial_1 \imath   \gg_n = 0, \quad \forall n \in \nn.
	\end{equation*}
	Thereby, the functions $ s \mapsto \ll \partial_1 \imath(s,t), \partial_2 \imath(s,t)\gg_n  $ are constant 
	for each fixed $ t $. Let $ s=0 $, then $ \imath(0,t)= \exp_x(0)=x $ and therefore $ \partial_2 \imath (0,t)=0 $ for all $ t $.
	Thus, $$ \ll\partial_1 \imath, \partial_2 \imath  \gg_n =0 \, (\forall n \in \nn), $$ that is $ \partial_1 \imath $
	and $ \partial_2 \imath $ are  F-orthogonal. This concludes the proof.
\end{proof} 
\begin{theorem}\label{lenm}
	Let $ x \in M $ and $ \mathcal{U} = \exp_x (B (0_x, i(x)) $ be a normal neighborhood of $x$. Let $ \ell : [0,1] \to M $
	be the unique geodesic in $ \mathcal{U} $ joining $ x $ to  $ y \in \mathcal{U}$. Then, for any other piecewise $MC^1$- 
	path $ \imath : [0,1] \to M$ joining $ x,y $, we have 
	\begin{equation*}
	L_n(\ell) \leqq L_n(\imath), \quad \forall n \in \nn.
	\end{equation*}
	If the equality holds, then $ \jmath $ must coincide with  $ \ell $, up to reparametrization.
\end{theorem}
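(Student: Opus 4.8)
The plan is to prove the length-minimizing property by the classical Gauss-lemma argument, adapted to each Finsler component $n$ separately. Fix $n \in \nn$. The geodesic $\ell(t)=\exp_x(tv)$ with $v = \exp_x^{-1}(y)$ satisfies $L_n(\ell) \leqq \varepsilon$ by Theorem~\ref{23} (with $\varepsilon = i(x)$, or $\|v\|^n_x$). The strategy is to show that any competing path $\imath$ from $x$ to $y$ has $L_n(\imath) \geqq \|v\|^n_x$, by decomposing its velocity into the radial direction (along geodesics emanating from $x$) and a component F-orthogonal to the geodesic spheres $\exp_x(S_{x,s})$.

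First I would handle the case where $\Img(\imath)$ stays inside $\mathcal{U}$. Using the normal neighborhood, write $\imath(\tau) = \exp_x(s(\tau)\jmath(\tau))$ where $s(\tau) > 0$ is the Finsler-radius and $\jmath(\tau) \in S_{x,1}$, so that in the ``polar'' coordinates furnished by $\exp_x$ the map $\imath$ factors through $(s,t)\mapsto \exp_x(s\jmath(t))$ exactly as in the Gauss lemma setup. Then $\imath'(\tau) = s'(\tau)\,\partial_1\imath + \partial_2\imath$, and by Gauss's lemma (Theorem~\ref{gl}) the two summands are F-orthogonal, while the computation inside that proof already established $\ll \partial_1\imath,\partial_1\imath\gg_n = 1$. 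Hence for each $n$,
\begin{equation*}
\ll \imath'(\tau),\imath'(\tau)\gg_n = |s'(\tau)|^2 + \ll \partial_2\imath,\partial_2\imath\gg_n \geqq |s'(\tau)|^2,
\end{equation*}
so $\|\imath'(\tau)\|^n_{\imath(\tau)} \geqq |s'(\tau)|$. Integrating,
\begin{equation*}
L_n(\imath) = \int_0^1 \|\imath'(\tau)\|^n\,d\tau \geqq \int_0^1 |s'(\tau)|\,d\tau \geqq s(1) - s(0) = \|v\|^n_x = L_n(\ell).
\end{equation*}

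Next I would dispose of the general case where $\imath$ may leave $\mathcal{U}$: let $\tau_0$ be the first exit time from the ball of Finsler-radius $\|v\|^n_x$ centered at $x$; the piece of $\imath$ up to $\tau_0$ already has $L_n \geqq \|v\|^n_x$ by the radial estimate, and the remaining piece only adds nonnegative length, so the inequality persists. For the equality/rigidity clause, equality in the two integral bounds forces $\partial_2\imath \equiv 0$ (so $\jmath$ is constant, i.e. the path is radial) and $s'(\tau) \geqq 0$ with $\|\imath'\|^n = |s'|$, meaning $\imath$ is a monotone reparametrization of the radial geodesic $\ell$; by uniqueness in Theorem~\ref{23} this forces $\imath$ to coincide with $\ell$ up to reparametrization.

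The main obstacle I anticipate is the orthogonal decomposition of $\imath'$ and the regularity bookkeeping of the radial function $s(\tau)$: the factorization $\imath = \exp_x(s\jmath)$ requires that $\imath$ avoid the center $x$ away from its endpoints (so that $s>0$ and $\jmath$ is well-defined and $MC^1$), and at the endpoint $y$ and at piecewise-breakpoints one must argue by splitting the interval and taking limits. Also, because we only have a collection of seminorms rather than a single norm, one must keep the index $n$ fixed throughout and verify the Gauss-lemma orthogonality $\ll\partial_1\imath,\partial_2\imath\gg_n=0$ holds for that same $n$ — which it does, since Theorem~\ref{gl} is proved componentwise. Everything else is the standard Gauss-lemma length comparison, so I would present those steps briefly and concentrate the write-up on the decomposition and the rigidity argument.
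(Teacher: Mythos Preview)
Your proposal is correct and follows essentially the same route as the paper's own proof: pull $\imath$ back through $\exp_x$ to obtain a polar decomposition $\imath(\tau)=\exp_x\bigl(r(\tau)\,v(\tau)\bigr)$ with $v(\tau)$ on the unit sphere, invoke the Gauss lemma (Theorem~\ref{gl}) for the F-orthogonality of the radial and tangential pieces, derive the pointwise bound $\bigl(\sn\imath'(\tau)\sn^n\bigr)^2=|r'(\tau)|^2+\sn\partial_2\mathbf{k}\sn^{n\,2}\geqq |r'(\tau)|^2$, integrate, and then treat the exit-from-$\mathcal U$ case and the equality clause exactly as you outline. The paper's write-up differs only cosmetically (it handles the singularity at the center via a limit $\delta=\lim_{\epsilon\to 0}r(\epsilon)$ rather than your interval-splitting, and phrases the escaping-path case as ``contains a segment joining $S_{x,s}$ and $S_{x,r}$'' rather than via a first exit time), so there is nothing substantive to contrast.
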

\begin{proof}
	Consider an $MC^1$-path $\imath : [0,1] \to \mathcal{U} $ connecting $ x $ to $ y $.
	Since $ \exp_x $ on $ B(0_x, i(x)) $ is a diffeomorphism we may find a unique  curve $$ t \to v(t): [0,1] \to T_xM$$
	with $ \sn v(t) \sn_{v(t)}^n = 1 \,(\forall n \in \nn) $ and a  curve $ r(t) : (0,1] \to (0,i(x)) $
	such that 
	\begin{equation*}
	\imath (t) = \exp_x (r(t)v(t))\coloneqq \mathbf{k}(r(t),t).
	\end{equation*}
	Locally, $ r(t)$ and $v(t) $ are obtained by the inverse of the exponential map after a smooth projection so $ r(t) $ and $ v(t) $ are piecewise $ MC^1 $.
	We may assume $ r(t) \neq 0 $, that is $ \imath(t) \neq x $ for all $ t \in (0,1] $ since otherwise
	we may define $ t_0 $ to be the last value such that $ \imath(t_0) = x $ and exchange $ \ell $ with $\imath \mid_{[t_0,1]}  $.
	Now we have 
	\begin{equation}\label{el}
	\imath'(t) = \partial_1 \mathbf{k}(r(t),t)r'(t) + \partial_2 \mathbf{k}(r(t),t).
	\end{equation}
	Also,
	\begin{equation*}
	\partial_1 \mathbf{k} = \big(T_{r v(t)} \exp_x \big)(v(t)) \quad \textrm{and} \quad 
	\partial_2 \mathbf{k} = \big(T_{r v(t)} \exp_x \big)(rv'(t)). 
	\end{equation*}
	By Theorem~\ref{gl}, $\partial_1 \mathbf{k}$ and $\partial_2 \mathbf{k}$ are F-orthogonal.
	By the same arguments for  proving~\eqref{fin} we have $$ \sn \partial_1 \mathbf{k} \sn^n_{v(t)}  = 1\, (\forall n \in \nn), $$ 
	and by~\eqref{el} we  obtain
	\begin{equation*}
	\big( \sn  \imath'(t) \sn_{\imath(t)}^n \big)^2 = \mid r'(t) \mid^2 + \big( \sn \dfrac{\dd \mathbf{k}(t)}{\dd t} \sn_{\mathbf{k}(t)}^n \big)^2 \geqq r'(t)^2.
	\end{equation*}
	Therefore, 
	\begin{equation}\label{last}
	L_n (\imath ) \geq \int_{0}^{1} \sn \imath(t) \sn_{\imath(t)}^n dt \geq \int_{0}^{1} \mid r'(t) \mid dt \geq r(1)- (\lim_{\epsilon \to 0}r(\epsilon)=\delta).
	\end{equation}
	Let $ y = \exp_x(rv) $ such that $ 0 < r < i(x) $ with $ v \in T_xM $ and $ \sn v \sn_x^n = 1 (\forall n \in \nn) $.
	
	For $ s, 0 < s <r $, the path $ \imath(t) $ contains a segment joining $ S_{x,s} $ and $ S_{x,r} $ and remains between them.
	By~\eqref{last} we have $ L_n(\imath) \geq r-\delta $ and so if $ \delta \to 0 $ then $ L_n(\imath) \geq r $. 
   Theorem~\ref{23} implies that there exists $ r_0 < i(x) $ such that $ L_{n}(\alpha) \leq r_0$ (we may find   $ u \in T_xM $ such that $ y = \exp(r_0u) $) but $  L_n (\imath)  \geq r_0 $, therefore for all $ n $ $$L_{n}(\alpha) \leq L_n(\imath)  .$$ 
	
	If $ L_{n}(\alpha) = L_n(\imath) $
	then in~\eqref{last} we must have the equality as well and this happens if and only if $ t \to v(t) $
	is constant and $ t \to r(t) $ is monotone. Thus, by a suitable reparametrization $ \imath $ becomes a geodesic. 
	Suppose this is the case, so $\imath:[0,r] \to M$ is the curve $ t \to  \exp_x(tv_0) $ and $ \exp_x(rv_0) =y $ for some $ v_0 \in T_xM $ with $ \sn v_0 \sn_x^n = 1 (\forall n \in \nn)$,
	but $ \exp_x $ is a diffeomorphism so $ v=v_0 $ and therefore $ \alpha = \imath $.
\end{proof} 

Let $ M $ be an $ MC^{\infty} $-nuclear Fr\'{e}chet manifold modeled on $ F $ with a Riemann-Finsler structure  $(\Finsler)_{n \in \nn}$.
Let a curve $ \ell:[a,b] \to M $ be an $MC^{\infty}$-curve. We denote the local representatives of $ \ell $ again by $ \ell $.  In a local chart $ U $, the coordinate of its
canonical lift is $ (\ell(t), \ell'(t)) $. For each $ n \in \nn $ we define the energy functional $ E_n $ by
\begin{equation*}
E_n(\ell) = \dfrac{1}{2} \int_a^b \ll \ell(t),\ell'(t)\gg_n  \dd t.
\end{equation*}
Take an $MC^{\infty}$-proper variation $ \mathbb{H} : (-\varepsilon, \varepsilon)\times [a,b] \to M $ of $ \ell $ such that 
$$ \mathbb{H}(0,s) = \ell(s), \quad \mathbb{H}(t,a) =\ell(a), \quad  \mathbb{H}(t,b) =\ell(b), $$
 for all $ t \in (-\varepsilon,\varepsilon) $.
 
Let $ \mathbb{H}_t(s)=\mathbb{H}(t,s)$, a curve $ \ell $ is called a critical point for  $ E_n $ if 
\begin{equation*}
\dfrac{\dd}{\dd t} \big( E_n (\mathbb{H}_t )\big) \mid_{t=0} = 0, \quad \forall n \in \nn.
\end{equation*}
The partial derivative of local representative of $ E_n : U \times F \to \rr $ are 
$$
\dd_1 E_n(u,e)(f) = \lim_{h \to 0} \dfrac{1}{h} \big(E_n(u+hf,e)-E_n(u,e) \big),
$$
$$
\dd_2 E_n(u,e)(f) = \lim_{h \to 0} \dfrac{1}{h} \big(E_n(u,e+hf)-E_n(u,e) \big).
$$
We will need the following result.
\begin{theorem}\cite[Theorem 6.3]{jo} \label{jos}
	Let $\mathscr{L} \in C^{\infty}(TM,\rr)$ be a Lagrangian. Then a smooth curve $J(t)$ is critical for $\mathscr{L}$
	if and only if it satisfies the Euler-Lagrange equation
		\begin{equation}
	(\dd_1 {L} )(\jmath(t),\jmath'(t)) - \dfrac{\dd}{\dd h}\mid_{h=t} (\dd_2 {L} )(\jmath(h),\jmath'(h)) = 0,
	\end{equation}
	in a local chart where $L$ and $\jmath(t)$ are, respectively, the local expressions of $L$ and $J(t)$, and
	$\dd_i L (i \in {1, 2})$ are the partial derivatives of $L$.
\end{theorem}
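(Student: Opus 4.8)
The plan is to localize: since both criticality and the Euler--Lagrange expression are stated in a chart, it suffices to work with the local representative $L$ of $\mathscr{L}$ and the local curve $\jmath(t)$. I would take an $MC^{\infty}$ proper variation $\mathbb{H}(\epsilon,t)$ with $\mathbb{H}(0,t)=J(t)$ and fixed endpoints $\mathbb{H}(\epsilon,a)=J(a)$, $\mathbb{H}(\epsilon,b)=J(b)$, and set $A(\epsilon)=\int_a^b L(\jmath_\epsilon(t),\jmath_\epsilon'(t))\,\dd t$, where $\jmath_\epsilon$ is the local representative of $\mathbb{H}_\epsilon$ and $\jmath_\epsilon'=\partial_t\jmath_\epsilon$. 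The associated variation field is $W(t)=\partial_1\mathbb{H}(0,t)$, which vanishes at $t=a,b$; conversely every $MC^{\infty}$ field $W$ with $W(a)=W(b)=0$ is realized this way, e.g. by $\jmath_\epsilon=\jmath+\epsilon W$ in the chart, so that ranging over all proper variations is the same as ranging over all such $W$.

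The next step is to compute $A'(0)$. Interchanging $\dd/\dd\epsilon$ with the integral is the first point that genuinely uses the $MC^{\infty}$ framework rather than being automatic: it follows from bounded differentiability of $L$ and the continuity of the induced maps into $\mathbb{L}_{\sigma,\varrho}$, which supply the uniform control needed to differentiate under the integral sign. The chain rule then gives
\[
A'(0)=\int_a^b\Big[(\dd_1 L)(\jmath,\jmath')(W)+(\dd_2 L)(\jmath,\jmath')(W')\Big]\,\dd t,
\]
where $W'=\partial_t W$.

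I would then integrate the second summand by parts in $t$. Because $W(a)=W(b)=0$, the boundary term $\big[(\dd_2 L)(\jmath,\jmath')(W)\big]_a^b$ vanishes, leaving
\[
A'(0)=\int_a^b E(t)\big(W(t)\big)\,\dd t,\qquad E(t):=(\dd_1 L)(\jmath(t),\jmath'(t))-\frac{\dd}{\dd h}\Big|_{h=t}(\dd_2 L)(\jmath(h),\jmath'(h)).
\]
The ``if'' direction is now immediate: if $E\equiv 0$ then the integrand vanishes for every variation, so $J$ is critical. For the converse, criticality says $\int_a^b E(t)(W(t))\,\dd t=0$ for all admissible $W$, and I would conclude $E\equiv 0$ by the fundamental lemma of the calculus of variations. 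Here $E(t)$ is dual-valued (an element of $F'$), so the usual finite-dimensional argument is not directly available; instead I would test against the special fields $W(t)=\chi(t)\,v$ with $v\in F$ arbitrary and $\chi$ a scalar bump compactly supported in $(a,b)$. This reduces the identity to $\int_a^b\chi(t)\,E(t)(v)\,\dd t=0$, and the classical scalar fundamental lemma applied to the continuous function $t\mapsto E(t)(v)$ forces $E(t)(v)=0$ for all $t$; since $v$ is arbitrary, $E\equiv 0$.

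The main obstacle I anticipate is not the variational combinatorics, which is classical, but the analytic bookkeeping peculiar to the Fréchet category: justifying the interchange of $\dd/\dd\epsilon$ and the integral, the integration by parts, and in particular that $t\mapsto(\dd_2 L)(\jmath(t),\jmath'(t))$ is itself $MC^{\infty}$ so that its $t$-derivative $\frac{\dd}{\dd h}|_{h=t}(\dd_2 L)$ exists and is continuous. Once the $MC^{\infty}$ machinery licenses these steps, the dual-valued fundamental lemma collapses to the scalar case as above, and no reflexivity or inner-product structure on $F$ is required.
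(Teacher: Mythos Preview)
The paper does not supply its own proof of this theorem: it is quoted verbatim as \cite[Theorem~6.3]{jo} and then immediately applied in the proof of Theorem~\ref{elq}. So there is nothing in the present paper to compare your argument against; the authors treat it as a black box imported from Vallejo's paper, with only the remark that the differentiability used there is Keller's differentiability, which suffices for the Lagrangians $\ll\cdot,\cdot\gg_n$ they care about.

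Your sketch is the standard first-variation argument and is sound at the level of a proof outline. The localization, the computation of $A'(0)$ via the chain rule, the integration by parts with vanishing boundary terms, and the reduction of the dual-valued fundamental lemma to the scalar case via $W(t)=\chi(t)v$ are all the expected steps, and you have correctly flagged the only genuinely delicate points in the Fr\'echet setting (differentiation under the integral, $MC^{\infty}$-regularity of $t\mapsto(\dd_2 L)(\jmath(t),\jmath'(t))$). If you want to see how those analytic details are handled, you would have to consult \cite{jo} directly; the present paper does not reproduce them.
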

We should mention that in the preceding theorem the used  differentiability is equivalent to the Keller's differentiability,
as we have seen functions  $\ll \cdot, \cdot \gg_n$ are   Keller's differentiable so we can apply it.
\begin{theorem}\label{elq}
	An $MC^{\infty}$-curve $ \ell : [a,b] \to M  $ is geodesic if and and only if in a local chart it satisfies the Euler-Lagrange equations
	\begin{equation}\label{lj}
	(\dd_1 E_n)(\ell(t),\ell'(t)) - \dfrac{d}{dh}\mid_{h=t} (\dd_2 E_n )(\ell(h),\ell'(h)) = 0, \forall n \in \nn.
	\end{equation}
\end{theorem}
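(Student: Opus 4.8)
The plan is to read each energy $E_n$ as the action of the $MC^{\infty}$ Lagrangian $\mathscr{L}_n\colon TM \to \rr$ whose local expression is $\mathscr{L}_n(x,v)=\tfrac12\ll v,v\gg_{n,x}$ and whose value on the canonical lift is the integrand of $E_n$. Since the products $\ll\cdot,\cdot\gg_n$ are of class $MC^{\infty}$ and in particular Keller differentiable, $\mathscr{L}_n$ meets the hypotheses of Theorem~\ref{jos}, so $\ell$ is critical for $E_n$ under proper variations if and only if its local representative obeys the $n$-th equation~\eqref{lj}. The theorem thus reduces to showing that $\ell$ is a geodesic of the spray $\s$ exactly when it is critical for \emph{every} $E_n$, which I would connect to the covariant description of geodesics from Section 4.

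First I would carry out the local computation. Differentiating in the fibre variable gives $(\dd_2 E_n)(\ell,\ell')(f)=\ll\ell',f\gg_n$, using bilinearity and the symmetry of the Riemann--Finsler product, while $(\dd_1 E_n)(\ell,\ell')(f)$ is half the base-point derivative of $v\mapsto\ll v,v\gg_n$ in the direction $f$. Applying the product rule to the $h$-derivative of $(\dd_2 E_n)(\ell(h),\ell'(h))$ and collecting terms, the left-hand side of~\eqref{lj} becomes, for every $f$, the pairing of $f$ with a combination of $\ell''$ and first base-derivatives of the metric. Comparing this combination with the Koszul identity that characterizes the metric-compatible, torsion-free connection $\nabla^n$ of Proposition~\ref{key} — equivalently, invoking the compatibility~\eqref{fc} together with the local formula~\eqref{geo} — one recognizes it as $\ll\nabla^n_{\ell'}\ell',f\gg_n$. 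Hence~\eqref{lj} for the index $n$ is equivalent to $\ll\nabla^n_{\ell'}\ell',f\gg_n=0$ for all $f$.

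It remains to assemble the two implications. The spray $\s$ is the one induced by the Riemann--Finsler structure: its covariant derivative~\eqref{cdc} is torsion-free and compatible with each $\ll\cdot,\cdot\gg_n$, so by the uniqueness clause of Proposition~\ref{key} it coincides with every $\nabla^n$; in particular $W\coloneq\nabla_{\ell'}\ell'=\nabla^n_{\ell'}\ell'$ is a single lift independent of $n$. If $\ell$ is a geodesic then $W=0$ by the characterization of geodesics established in Section 4, whence $\ll W,f\gg_n=0$ for all $n$ and all $f$, and~\eqref{lj} holds for every $n$. Conversely, if~\eqref{lj} holds for every $n$, then $\ll W,f\gg_n=0$ for all $n$ and all test vectors $f$; taking $f=W$ yields $\sn W\sn^n_{\ell}{}^2=0$ for every $n$, so $W=0$ by Hausdorffness, i.e.\ $\ell$ is a geodesic.

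I expect the main obstacle to be the passage from the scalar identities~\eqref{lj} to the vanishing of $\nabla_{\ell'}\ell'$. Each $\ll\cdot,\cdot\gg_n$ is only a \emph{weak} (possibly degenerate) semi-inner product, so a single equation~\eqref{lj} determines $\nabla_{\ell'}\ell'$ only modulo the radical of $\ll\cdot,\cdot\gg_n$; the genuine force of the statement lies in quantifying over all $n$, where joint nondegeneracy of the graded family removes the common residual $W$. Making this rigorous requires the identification $\nabla=\nabla^n$ for every $n$ — so that all the equations~\eqref{lj} test the \emph{same} second covariant derivative — which rests precisely on the uniqueness in Proposition~\ref{key} together with torsion-freeness and metric compatibility of the spray connection. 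By contrast, the regularity needed to differentiate under the integral and to apply Theorem~\ref{jos} is routine given the $MC^{\infty}$-smoothness of the products.
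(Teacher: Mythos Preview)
Your argument is correct but organized differently from the paper's. The paper does not expand the Euler--Lagrange expression in local coordinates and match it with the Koszul identity; instead it derives the first variation formula directly: for a proper variation $\mathbb{H}$ with $X=\dd\mathbb{H}(\partial/\partial s)$ and $Y=\dd\mathbb{H}(\partial/\partial t)$, compatibility of $\nabla^n$ with $\ll\cdot,\cdot\gg_n$, torsion-freeness, and integration by parts yield $\tfrac{\dd}{\dd t}E_n(\mathbb{H}_t)\big|_{t=0}=-\int_a^b\ll Y(0,s),\nabla^n_{\ell'}\ell'\gg_n\,\dd s$, so that criticality is tied to the covariant acceleration, and Theorem~\ref{jos} is invoked only at the very end to translate ``critical'' into the local equations~\eqref{lj}. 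You invert this order --- apply Theorem~\ref{jos} first, then recognize the local equation as $\ll\nabla^n_{\ell'}\ell',f\gg_n=0$ via the Koszul formula --- which is the standard coordinate route. Both approaches rest on the same covariant ingredients from Proposition~\ref{key}; your version has the merit of making explicit the joint-nondegeneracy step (taking $f=W$ and letting $n$ range) that the paper's sentence ``the right side is zero if and only if $\ell$ is geodesic'' leaves tacit, and of flagging the identification of the spray connection with every $\nabla^n$, which the paper likewise does not spell out.
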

\begin{proof}
	For an $MC^{\infty}$-variation $ \mathbb{H}: (t,s) \mapsto \mathbb{H}(t,s) $, along $ \mathbb{H} $ define the vector fields 
	$$Y \coloneq \dd \mathbb{H} (\partial / \partial t), \quad X \coloneq \dd \mathbb{H} (\partial / \partial s). $$ 
	For all $n \in \nn$ we have
	\begin{align*}
	\dfrac{\dd}{\dd t} (E_n (\mathbb{H}_t))  
	=& \dfrac{1}{2}  (\int_{a}^{b} \dfrac{\dd}{\dd t} \ll X, X \gg_n)  ds \nonumber \\ 
	=& \int_{a}^{b} \ll \nabla^n_{Y}X , X \gg_n ds \quad \textrm{ since $\nabla^n$ is compatible}\nonumber \\ 
	=& \int_{a}^{b} \ll \nabla^n_{X}Y, X \gg_n ds \quad \textrm{since  $\nabla^n$ is torsion-free}\nonumber \\  
	=& \int_{a}^{b} \big(\dfrac{\dd}{\dd s} \ll Y, X \gg_n - \ll   Y,\nabla^n_{X}  X \gg_n \big)  ds \nonumber \\
	=&  \ll Y, X \gg_n \mid_a^b- \int_{a}^{b} \ll   Y,\nabla^n_{X}  X\gg_n  ds \nonumber \\ 
	\end{align*}
	Since the variation is proper we have $$ Y(a) = Y(b)= 0 .$$ Moreover, $  X(0,s) = \partial \mathbb{H} / \partial s (0,s) = \ell'(s) $, therefore,
	\begin{equation*}
	\dfrac{\dd}{\dd t}(E (\mathbb{H}_t) \mid_{t =0} = - \int_{a}^{b} \ll Y(0,s), (\nabla^n_{\ell'}\ell')(s) \gg_n ds. 
	\end{equation*}
	The right side is zero if and only if $ \ell $ is geodesic. That is, the critical points are geodesic and hence by Theorem~\ref{jos}
	they need to satisfy the Euler-Lagrange equations \eqref{lj}.
\end{proof}
Let $N$ be a closed  Einstein manifold of dimension $ n $. The manifold of Riemannian metrics on $ N $, $ \mathcal{M} $, is a nuclear Fr\'{e}chet manifold, it is also $MC^{\infty}$ (see~\cite{k5,ml}).
The solution to the Ricci flow equation
$$
\dfrac{\dd g(t)}{\dd t} = -2 \textrm{Ric}(g(t))
$$ 
is $ g(t) = (1-2\lambda)tg_0 $, where $ g_0 $ is a Riemannian metric and $ \textrm{Ric}(g_0) = \lambda g_0$, see~\cite{ha}. This is a curve on $ \mathcal{M} $. In local charts, obviously $ g(t) $ is $ C^1 $ and 
$$ g'(t)=-2\lambda g_0 \in \mathbb{L}_{\sigma,\varrho}([0,T],F),$$ where $\sigma$ is the standard metric on $\rr$, $ T$ is a time less than the finite singular time and $$  g': [0,T] \to \mathbb{L}_{\sigma,\varrho}([0,T],F)$$ is constant  and hence a continuous map into $\mathbb{L}_{\sigma,\varrho}([0,T],F) $.
Thus, $ g(t) $ is $ MC^1 $ and by induction it follows that $ g(t) $ is $ MC^k $ with $$ g^{(k)} =0, (k\geq 2).$$

Simple calculations show that for all $ n $ we have $$(\dd_1 E_n)(g(t),g'(t)) \neq 0, \quad \dfrac{\dd}{\dd h}\mid_{h=t} (\dd_2 E_n )(g(h),g'(h)) = 0.$$
So, the Euler-Lagrange equations do not hold, therefore, $ g(t) $ is not geodesic. This result is proved in~\cite{rz1} by using the geodesic equation
on the manifold of Riemannian metrics which is considered as the projective limit of Banach manifolds.   

\vskip 1cm

\end{document}